\documentclass[12pt]{article}
\usepackage{sty-jir}
\usepackage[margin=1in]{geometry}

\newcommand{\Variety}{\mathcal{V}}

\newcommand{\weight}{w}
\newcommand{\profilePath}{\beta}
\newcommand{\profileMap}{\Phi_{\weight}}
\newcommand{\OpenSimplex}{\Delta_{k-1}^{\circ}}

\newcommand{\QQ}{\mathbb{Q}}
\newcommand{\RR}{\mathbb{R}}
\newcommand{\CC}{\mathbb{C}}
\newcommand{\ZZ}{\mathbb{Z}}

\DeclareMathOperator{\Crit}{Crit}
\DeclareMathOperator{\trdeg}{trdeg}
\DeclareMathOperator{\rank}{rank}
\DeclareMathOperator{\Frac}{Frac}

\newcommand{\RidgeCorr}{{\mathfrak{L}}}
\newcommand{\closedalgebraicset}{X}

\newcommand{\cO}{\mathcal O}

\newcommand{\OI}{\cO(I)}

\newcommand{\MI}{\Frac{\cO(I)}}

\newcommand{\MIC}{\MI(\sqrt{-1})}

\newcommand{\positiveprofiles}{\PP_{>0}^{k-1}}
\newcommand{\interval}{I}
\newcommand{\xPath}{\eta}

\newcommand{\domainDensity}{\mathbb{R}^d}

\newcommand{\varx}{{x}}
\newcommand{\varw}{\beta}

\newcommand{\indet}{y}

\newcommand{\mixturedensity}{f^{(\weight)}}

\newcommand{\selectthefs}{f_1,\ldots,f_k}

\newcommand{\PP}{\mathbb{P}}

\newcommand{\xprojection}[2]{\pi_x:{#1}\longrightarrow{#2}}
\newcommand{\betaprojection}[2]{\pi_\profilePath:{#1}\longrightarrow{#2}}
\newcommand{\wprojection}[2]{\pi_w:{#1}\longrightarrow{#2}}

\newcommand{\graphprofilemap}{\operatorname{Graph}(\profileMap)}

\newcommand{\profileopenset}{U}

\title{
A ridgeline correspondence criterion:
\\
the number of modes of a Gaussian mixture is finite
}

\author{Carlos Améndola and Jose Israel Rodriguez}
\date{}
\begin{document}
\maketitle 

\begin{abstract}

 We prove that every finite multivariate Gaussian mixture density has only finitely many modes. 
 Our approach combines an algebraic formulation of the ridgeline theory of Ray and Lindsay (2005) with a transcendence-degree argument based on Ax's functional-transcendence theorem to bound the cardinality of the set of critical points. Our techniques extend recent work by Wang (2026), who used Ax's theorem together with real-analytic curve selection to prove finiteness of the critical set of homoscedastic Gaussian mixtures. We introduce the ridgeline correspondence and use it to obtain a finiteness result that applies to arbitrary heteroscedastic Gaussian mixtures. Our framework also establishes finiteness of the number of modes for additional classes of polynomial-exponential mixtures and generalized Gaussian mixtures. 

\end{abstract}

\section{Introduction}

Gaussian mixtures have been widely studied for their versatility and universal approximation properties, see e.g. \cite[Ch. 3]{Goodfellow-et-al-2016}.
They 
also model the presence of several normally distributed populations and are useful in clustering \cite[Sec. 9.2]{bishop2006pattern}. Notably, the \emph{mean-shift algorithm} with Gaussian kernel finds modes, i.e., local maxima, of a Gaussian mixture \cite{mean-shift}. They also feature in nonparametric maximum likelihood estimation~\cite{NPMLE}.

While a one-dimensional mixture of $k$ Gaussians can exhibit at most $k$ modes, the situation becomes more complex for mixtures in $\RR^d$ \cite{carreira2003number}.

Let $m(d,k)$ denote
the maximum number of modes among all $d$-dimensional Gaussian mixtures with $k$ components. 
Only a
few values of $m(d,k)$ are known. 
Apart from the previously mentioned $m(1,k)=k$ and the obvious $m(d,1)=1$,
Ray and Ren showed
that $m(d,2)=d+1$ 
\cite{Ray-Ren}. 
In fact, already $m(2,3)$ is unknown, with the best lower bound improved recently to $m(2,3) \geq 7$ by Kabata, Matsumoto and Okuno \cite{kabata2026least}. 

One natural 
question is whether $m(d,k) < \infty$ for all $d,k$. In this paper we answer this question affirmatively, as the main application of the theory developed here. The main inspiration comes from the \emph{ridgeline theory} introduced by Ray and Lindsay \cite{Ray-Lindsay} while studying critical points of Gaussian mixtures, and from the recent connection to transcendence theory first made by Wang \cite{wang2026finitegaussianmixturesfiniteness},
who proved
the finiteness of critical points in the special case of homoscedastic Gaussian mixtures.

Our main result is a ridgeline criterion for concluding finitely many critical points, \Cref{thm:criterion}, and as a main application we obtain the following.

\begin{theorem}
[The number of modes of a Gaussian mixture is finite]
\label{theorem:quadratic}
Let
\[
f(x)=\sum_{i=1}^{k} w_i \frac{1}{\sqrt{\det(2\pi\Sigma_i)}} \exp\!\left( -\frac{1}{2} (x-\mu_i)^\top \Sigma_i^{-1} (x-\mu_i) \right),
\]
where $\mu_i \in \RR^d$, $\Sigma_i$ is symmetric positive definite and $w_i>0$ with $w_1+\dots+w_k=1$.
Then $f$ has finitely many critical points.
In particular, every Gaussian mixture has finitely many modes. 
\end{theorem}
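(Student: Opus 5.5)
Following Ray and Lindsay, I will parametrize the critical set of $f$ by the ridgeline correspondence. Write $f_i(x)=c_i\exp(-q_i(x)/2)$ with $q_i(x)=(x-\mu_i)^\top\Sigma_i^{-1}(x-\mu_i)$. At a critical point $x$ we have $\sum_i w_i f_i(x)\Sigma_i^{-1}(x-\mu_i)=0$. Setting $\beta_i=w_if_i(x)/f(x)\in\OpenSimplex$, this becomes the linear system $\bigl(\sum_i\beta_i\Sigma_i^{-1}\bigr)x=\sum_i\beta_i\Sigma_i^{-1}\mu_i$. Since $\sum_i\beta_i\Sigma_i^{-1}$ is positive definite, $x$ is a rational function $x^*(\beta)$ of $\beta$: this is the ridgeline. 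So the critical points are in bijection with the points $\beta\in\OpenSimplex$ on the ridgeline satisfying the consistency equations
\[
\frac{\beta_i}{\beta_j}=\frac{w_if_i(x^*(\beta))}{w_jf_j(x^*(\beta))},
\qquad\text{i.e.}\qquad
\log\beta_i-\log\beta_j=\log\tfrac{w_ic_i}{w_jc_j}-\tfrac12\bigl(q_i(x^*(\beta))-q_j(x^*(\beta))\bigr).
\]
Finiteness of critical points thus reduces to finiteness of the solution set $S\subset\OpenSimplex$ of this system. It consists of $k-1$ equations in $k-1$ unknowns, each equating a logarithm of a rational function with a rational function. I expect this to be exactly the setting of \Cref{thm:criterion}, so the main task is verifying its hypotheses.

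The set $S$ is real-analytic and semialgebraic-in-$\exp$. Suppose it is infinite. I will first show it has no accumulation point on the boundary of the simplex, using coercivity: critical points of $f$ lie in a compact set, because $\nabla f\neq0$ outside a large ball since the dominating component pushes inward. By curve selection, an infinite $S$ then contains a nonconstant real-analytic arc $\beta(t)$, $t\in I$. Along this arc, consider the field $\MI$ of meromorphic germs. The functions $\beta_1,\dots,\beta_k$ and the rational functions $q_i(x^*(\beta))$ generate a subfield of transcendence degree $1$ over $\CC$, since everything is algebraic over $\CC(\beta)$ and the arc is a curve. On the other hand, the $\log\beta_i$ satisfy the linear relations above modulo algebraic functions.

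Apply Ax's theorem (Schanuel for differential fields) to $y_i=\log\beta_i-\log\beta_1$ and $e^{y_i}=\beta_i/\beta_1$, for $i=2,\dots,k$. It gives
\[
\trdeg_\CC\CC(y,e^y)\ge (k-1)+\rank\bigl(dy_i\bigr)\quad\text{unless the } y_i \text{ are }\QQ\text{-linearly dependent modulo constants.}
\]
The left side is $1$, because each $y_i$ equals a rational function of $\beta$ up to a constant. So either $k-1+1\le 1$, or some nontrivial integer combination $\sum_i n_iy_i$ is constant along the arc. In the latter case the monomial $\prod_i(\beta_i/\beta_1)^{n_i}$ is constant and $\sum_i n_i(q_i-q_1)(x^*(\beta))$ is constant. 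Iterating (or applying Ax to the whole relation module), I reduce to the case where the arc lies in a coset of a subtorus on which the system degenerates. This means a nonconstant arc of critical points along which $f$ is constant.

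The main obstacle is ruling out this degenerate situation, namely an entire curve of critical points of a Gaussian mixture. The curve $x^*(\beta(t))$ is algebraic, and I will extend it by analytic continuation within the ridgeline's Zariski closure. The exponent relations then force a nontrivial integer combination of the quadratic forms $q_i-q_1$ to be constant along an algebraic curve that is unbounded in at least one direction of $\beta$ (toward a boundary face of the simplex). Compactness of the critical set should contradict this. Concretely, approaching a face where some $\beta_i\to0$ forces $\log\beta_i\to-\infty$, while the right-hand side stays bounded on bounded $x$. I expect to handle this by the ridgeline correspondence criterion: the algebraic closure of the graph of $\beta\mapsto x^*(\beta)$ meets the boundary in a way incompatible with the constant monomial relation. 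Making this boundary analysis rigorous for heteroscedastic $\Sigma_i$, where $x^*$ is genuinely rational rather than affine, is where I anticipate the real difficulty.
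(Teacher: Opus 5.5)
\textbf{Gap: the degenerate case, where the $y_i$ are $\QQ$-linearly dependent modulo constants, is never closed.}

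Your skeleton matches the paper's: the rational ridgeline parametrization $x^*(\beta)$ (which is the finite-fiber hypothesis), compactness of the critical set, curve selection, and Ax's theorem applied to the $y_i$ with $e^{y_i}=\beta_i/\beta_1$.

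First, your transcendence-degree upper bound is wrong, though this is repairable. The field generated by the coordinates of a real-analytic arc has transcendence degree equal to the dimension of the arc's Zariski closure, not $1$. For example, $\CC(t,e^t)$ has transcendence degree $2$, so ``the arc is a curve'' gives no bound. What is true is the following. Each $y_i$ equals a constant plus a rational function of the ratios $\beta_j/\beta_1=e^{y_j}$, because $x^*$ is rational and homogeneous of degree $0$. Hence $\CC(y,e^y)=\CC(e^{y_2},\dots,e^{y_k})$ and $\trdeg_\CC\CC(y,e^y)\le k-1$. In the $\QQ$-independent case this still contradicts Ax's lower bound of $k$.

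Second, and this is the real gap, you leave the degenerate case to an analytic-continuation and boundary argument that you do not carry out. That route cannot work as described. The function $f$ is automatically constant along any arc of critical points, since $\frac{d}{dt}f(\xPath(t))=\nabla f\cdot\xPath'=0$, so your ``reduction'' says nothing. Compactness alone also never rules out a curve of critical points: the paper's radial example $c_d\exp(-(\lVert x\rVert^2-1)^2)$ has a compact critical set containing a whole sphere.

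The contradiction has to come from transcendence, and the missing idea is to apply Ax to a basis.
\begin{itemize}
\item Choose $v_1,\dots,v_s$ among the $y_i$ so that $v_1',\dots,v_s'$ is a $\QQ$-basis of $\operatorname{span}_\QQ\{y_2',\dots,y_k'\}$.
\item Then $y_i=c_i+\sum_j q_{ij}v_j$ with $c_i\in\RR$ and $q_{ij}\in\QQ$. For suitable integers $N_i>0$ this gives $(\beta_i/\beta_1)^{N_i}=(e^{y_i})^{N_i}\in K_0:=\CC(e^{v_1},\dots,e^{v_s})$.
\item So every ratio $\beta_i/\beta_1$ is algebraic over $K_0$. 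Each $v_j$ is rational in these ratios, so it is algebraic over $K_0$ too.
\item Therefore $\trdeg_\CC\CC(v,e^v)\le s$. Proposition~\ref{prop:one-variable-ax} gives $\trdeg_\CC\CC(v,e^v)\ge s+1$ when $s\ge 1$, a contradiction.
\item If $s=0$, every $\beta_i/\beta_1$ is constant along the arc, which contradicts nonconstancy.
\end{itemize}
This handles all cases uniformly; it is exactly the argument of Theorem~\ref{thm:criterion}. No boundary analysis is needed.
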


As an outcome, the conditional upper bounds of Améndola-Engstrom-Haase \cite{Amendola-Engstrom-Haase} and the recently improved ones by Nguyen \cite{nguyen2026bounds} now apply unconditionally in all cases. 
In this recent flurry of activity on the problem it is also worth noting the unconditional result of Okuno and Kabata
\cite{okuno2026mixtures} proving that there are at most eight modes for homoscedastic mixtures of three Gaussians. 

\medskip

 \medskip
This paper is organized as follows.
In \Cref{sec:ridgeline}, 
    we introduce the ridgeline correspondence for polynomial--exponential mixtures and show that the critical points of a mixture are encoded by its intersection with the graph of a real-analytic profile map. 
In \Cref{sec:analytic-transcendence-ingredients}, 
    we develop the analytic and transcendence degree  ingredients needed for the main argument.
In \Cref{sec:ax-ridgeline-bridge}, 
    we combine these ingredients to prove the 
    ridgeline finiteness criterion, 
    \Cref{thm:criterion}. 
In \Cref{sec:gaussian-mixtures}, 
    we apply this criterion to arbitrary Gaussian mixtures, proving \Cref{theorem:quadratic}. 
To conclude, 
    we discuss applications to nonparametric maximum likelihood estimation (NPMLE),    
    extensions
    to other polynomial--exponential mixture families, 
    examples demonstrating the necessity of the hypotheses, and open questions concerning the  number $m(d,k)$ of Gaussian mixture modes.

    \section{Ridgeline correspondence and critical point incidence}\label{sec:ridgeline}
In this section we define the main algebraic object of the \demph{ridgeline correspondence} associated to a family of mixture densities. 
This is inspired by the 
\demph{likelihood correspondence} studied in algebraic statistics~
\cite{MR3329087,kahle2026likelihood,MRWW2024-HS-Conjecture}. 
In the Gaussian mixture case, the ridgeline correspondence 
is directly related to the
\demph{ridgeline manifold} introduced by Ray and Lindsay~\cite{Ray-Lindsay}.

\medskip

\subsection{The family of mixture densities}

Fix strictly positive continuously differentiable probability
densities
$f_1,\ldots,f_k:
\RR^d
\longrightarrow
\RR_{>0}.
$
For \demph{weights}
$\weight
=
(\weight_1,\ldots,\weight_k)$
in the $(k-1)$-dimensional \demph{open probability simplex},
\[
\OpenSimplex:= \{ (w_1,\dots,w_k)\in \RR^k:  w_i>0,\; w_1+\cdots+w_k=1
\},
\]
define the corresponding \demph{mixture density} by
\begin{equation}
\label{eq:mixture}
\mixturedensity(x)
=
\sum_{i=1}^{k}
\weight_i f_i(x),
\qquad
x\in\RR^d.
\end{equation}
The component densities
$f_1,\ldots,f_k$
are fixed, while the weight vector $\weight$ selects one member of
the family of densities
\[
\left\{
\mixturedensity:
\weight\in\OpenSimplex
\right\}.
\]
The \demph{score function} 
$s_i:\domainDensity\to \domainDensity$
of the component density $f_i:\domainDensity\to \RR_{>0}$ is
\begin{equation}\label{eq:score-field}
s_i(x)
:=
\nabla\log f_i(x)
=
\frac{\nabla f_i(x)}{f_i(x)}.
\end{equation}
Since $\nabla f_i(x)=f_i(x)s_i(x)$,
the \demph{critical set} of the mixture \eqref{eq:mixture} is
\begin{align}
\label{eq:crit-f}
\Crit(\mixturedensity)
&\notag
:=
\left\{
x\in\RR^d:
\nabla\mixturedensity(x)=0
\right\}\\
&=
\left\{
x\in\RR^d:
\sum_{i=1}^{k}
\weight_i f_i(x)s_i(x)=0
\right\}.
\end{align}

\subsection{The algebraic ridgeline correspondence}

    We now specialize to polynomial--exponential component densities.
Suppose that
\begin{equation}\label{eq:nice-f-i-h-i-polynomial}
f_i(x)=e^{h_i(x)},
\qquad
h_i\in\RR[x_1,\ldots,x_d],
\qquad
i=1,\ldots,k.
\end{equation}
Here constant terms are allowed in the polynomials $h_i$ . In
particular, this notation includes the normalization constants that
occur in Gaussian densities.

The score functions are the polynomial maps
$s_i(x)
=
\nabla\log f_i(x)
=
\nabla h_i(x).
$
Thus,
\begin{align}
\label{eq:polynomial-exponential-gradient}
\nabla\mixturedensity(x)
&=
\sum_{i=1}^k
\weight_i e^{h_i(x)}\nabla h_i(x).
\end{align}
Although the score vectors are polynomial in $x$ , their coefficients
in~\eqref{eq:polynomial-exponential-gradient} involve exponentials.

Replacing $\weight_i e^{h_i(x)}$ with the indeterminate $\profilePath_i$ gives  
the 
\demph{ridgeline correspondence polynomials} 
\begin{align}\label{eq:define-G-polynomials}
\begin{bmatrix}
G_1(x,\profilePath)\\
\vdots\\
G_d(x,\profilePath)
\end{bmatrix}
:=
\sum_{i=1}^k
\profilePath_i
\nabla
h_i(x)
\end{align}
in the polynomial ring
$S
:=
\CC[
x_1,\ldots,x_d,
\profilePath_1,\ldots,\profilePath_k
].$
Each $G_j$ is homogeneous in the profile coordinates $\profilePath$, so if $\profilePath_1+\cdots+\profilePath_k\neq 0$, 
then we assume  $\profilePath_1+\cdots+\profilePath_k = 1$.

\begin{definition}
\label{def:ridgeline-ideal-correspondence}
Let $f_1,\dots,f_k$ be as in \eqref{eq:nice-f-i-h-i-polynomial}
and 
let $G_1,\ldots,G_d$ be the polynomials defined in \eqref{eq:define-G-polynomials}.
The \demph{ridgeline correspondence ideal} associated with 
$\selectthefs$ is the saturated ideal
\begin{equation}
\label{eq:ridgeline-ideal}
I_{\mathrm{ridge}}
:=
\langle G_1,\dots,G_d\rangle 
:
(\profilePath_1\cdots\profilePath_k)^\infty
\subseteq S.
\end{equation}
The \demph{ridgeline correspondence} of
$\selectthefs$ is the
vanishing set of $I_{\mathrm{ridge}}$, and we write
\begin{equation}
\label{eq:ridgeline-correspondence}
\RidgeCorr
\bigl(\selectthefs\bigr)
:=
\Variety_{\CC^d\times\PP_{\CC}^{k-1}}
\left(
I_{\mathrm{ridge}}
\right)
\subseteq
\CC^d\times\PP_{\CC}^{k-1}.
\end{equation}
\end{definition}

\begin{remark}[Saturation on the nonzero profile locus] 
\label{remark:saturation-nonzero-profile} Suppose that 
\[ [\profilePath]\in\PP_{\CC}^{k-1} \qquad\text{and}\qquad \profilePath_1\cdots\profilePath_k\neq0. \] Then 
$(x,[\profilePath]) \in \RidgeCorr(f_1,\ldots,f_k)$
if and only if 
$G_1(x,\profilePath) = \cdots = G_d(x,\profilePath) = 0.$ 
Thus saturation does not alter the solutions on the nonzero profile locus $\left\{ [\profilePath]\in\PP_{\CC}^{k-1}: \profilePath_1\cdots\profilePath_k\neq0 \right\}.$
\end{remark}

\subsection{The profile map and critical point incidence}

\begin{definition}[Positive profiles and the profile map]
The \demph{positive real profile locus} is
\[
\positiveprofiles
:=
\left\{
[\profilePath]\in\PP_{\RR}^{k-1}:
[\profilePath]
\text{ has a representative with all coordinates positive}
\right\}.
\]
For
$ \weight\in\OpenSimplex$ , define the \demph{profile map with respect
to $\weight$ } by
\begin{equation}
\label{eq:define-profile-map-w}
\profileMap:
\RR^d\longrightarrow\positiveprofiles,
\qquad
x\longmapsto
[
\weight_1f_1(x):
\cdots:
\weight_kf_k(x)
].
\end{equation}
\end{definition}

The \demph{profile graph} is
\begin{equation}
\label{eq:graph-of-profile-map}
\graphprofilemap
:=
\left\{
(x,\profileMap(x)):
x\in\RR^d
\right\}
\subseteq
\RR^d\times\positiveprofiles.
\end{equation}
We also regard the profile graph as a subset of the complex ambient
space through the natural inclusions
\[
\graphprofilemap
\subseteq
\RR^d\times\positiveprofiles
\subseteq
\RR^d\times\PP_{\RR}^{k-1}
\subseteq
\CC^d\times\PP_{\CC}^{k-1}.
\]

\begin{example}
In the polynomial--exponential setting, the profile map is real analytic, and hence its graph is a real-analytic submanifold of $\RR^d\times\positiveprofiles$.
Indeed, if $f_i=e^{h_i}$ , then
\begin{equation}
\label{eq:profile-graph-polynomial-exponential}
\graphprofilemap
=
\left\{
\left(
x,
[
\weight_1e^{h_1(x)}:
\cdots:
\weight_ke^{h_k(x)}
]
\right):
x\in\RR^d
\right\}.
\vspace{-12pt}
\end{equation}
\end{example}

\begin{notation}
Let
\[
\betaprojection
    {\CC^d\times\PP_{\CC}^{k-1}}
    {\PP_{\CC}^{k-1}}
,
\qquad
(x,[\profilePath])
\longmapsto
[\profilePath]
\]
and
\[
\xprojection
    {\CC^d\times\PP_{\CC}^{k-1}}
    {\CC^d}
,
\qquad
(x,[\profilePath])
\longmapsto
x
\]
When restricting either projection, we indicate its domain and
codomain.
\end{notation}

The set of critical points of $\mixturedensity$
admit the following description.

    \begin{proposition}[Critical point incidence] 
\label{prop:critical-point-incidence} 
Suppose that $f_i=e^{h_i}$ with $h_i\in\RR[x_1,\ldots,x_d]$ for $i=1,\ldots,k$ .
For every $\weight\in\OpenSimplex$, the  projection
\[ 
\xprojection
    {\RidgeCorr(f_1,\ldots,f_k) \cap \graphprofilemap}
    {\Crit(\mixturedensity)} 
\] 
 induces a bijection.
Its inverse is 
$    x \longmapsto (x,\profileMap(x)).$ 
Equivalently, 
\[ 
    x\in\Crit(\mixturedensity) \quad\
    \text{ if and only if }
    \quad (x,\profileMap(x)) \in \RidgeCorr(\selectthefs). 
    \] \end{proposition}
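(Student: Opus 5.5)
The argument unwinds the definitions, with \Cref{remark:saturation-nonzero-profile} as the one real ingredient. First I will check that the map $x\mapsto(x,\profileMap(x))$ lands in the intersection $\RidgeCorr(\selectthefs)\cap\graphprofilemap$ when $x\in\Crit(\mixturedensity)$; this is the ``equivalently'' statement. Then I will observe that $\pi_x$ restricted to $\graphprofilemap$ is injective, since a graph meets each fiber $\{x\}\times\PP^{k-1}_{\CC}$ in at most the single point $(x,\profileMap(x))$. This gives a bijection onto the set of $x$ whose graph point lies in the correspondence. So everything reduces to the pointwise equivalence
\[
x\in\Crit(\mixturedensity)\iff (x,\profileMap(x))\in\RidgeCorr(\selectthefs).
\]

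For this equivalence, fix $x\in\RR^d$ and choose the representative $\profilePath_i:=\weight_i e^{h_i(x)}$ of $\profileMap(x)$. Every coordinate is strictly positive, because $\weight\in\OpenSimplex$ and exponentials are positive. Hence $\profilePath_1\cdots\profilePath_k\neq0$, so $[\profilePath]$ lies in the nonzero profile locus. By \Cref{remark:saturation-nonzero-profile}, the statement $(x,[\profilePath])\in\RidgeCorr(\selectthefs)$ is then equivalent to $G_j(x,\profilePath)=0$ for $j=1,\ldots,d$.

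Substituting into \eqref{eq:define-G-polynomials} and comparing with \eqref{eq:polynomial-exponential-gradient} gives
\[
(G_1,\ldots,G_d)(x,\profilePath)=\sum_{i=1}^k\weight_ie^{h_i(x)}\nabla h_i(x)=\nabla\mixturedensity(x).
\]
So the vanishing of the $G_j$ is exactly the condition $x\in\Crit(\mixturedensity)$. Since the $G_j$ are homogeneous in $\profilePath$, their vanishing does not depend on the chosen representative. This confirms that membership in $\RidgeCorr$ is well defined on projective points.

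The only step needing care is the appeal to the remark, which I would justify briefly. Containment $\langle G\rangle\subseteq I_{\mathrm{ridge}}$ gives one direction: points of $\RidgeCorr$ satisfy $G=0$. For the converse, take $g\in I_{\mathrm{ridge}}$, so that $(\profilePath_1\cdots\profilePath_k)^N g\in\langle G\rangle$ for some $N$. At a point where $G=0$ and $\prod\profilePath_i\neq0$, this forces $g=0$ there. Homogeneity in $\profilePath$ makes the vanishing locus in $\CC^d\times\PP^{k-1}_{\CC}$ meaningful. With that in place, the bijection and the stated inverse follow immediately.
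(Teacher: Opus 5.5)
Your proof is correct and follows the paper's argument. You substitute the positive representative $\beta_i=w_ie^{h_i(x)}$ to identify $(G_1,\ldots,G_d)$ with $\nabla f^{(w)}(x)$, use positivity and Remark~\ref{remark:saturation-nonzero-profile} to discard the saturation, and use that a graph meets each fiber $\{x\}\times\PP_{\CC}^{k-1}$ exactly once to get the bijection with the stated inverse; your short verification of the saturation remark via $(\beta_1\cdots\beta_k)^N g\in\langle G_1,\ldots,G_d\rangle$ is a small addition, since the paper states that remark without proof.
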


\begin{proof}

Fix $x\in\RR^d$ and use the positive homogeneous representative \[ (\profilePath_1,\ldots,\profilePath_k) = \bigl( \weight_1f_1(x),\ldots,\weight_kf_k(x) \bigr) \] of $\profileMap(x)$ . 
Substituting, 
we get, 
\begin{align*} G_j(x,\profileMap(x)) 
    &:= 
    G_j\bigl( x, (\weight_1f_1(x),\ldots,\weight_kf_k(x)) \bigr)\\ &= \frac{\partial\mixturedensity}{\partial x_j}(x). \end{align*}
Therefore,
\[
\nabla\mixturedensity(x)=0
\quad\Longleftrightarrow\quad
G_j(x,\profileMap(x))=0
\quad\text{for every }j=1,\ldots,d.
\]
Every profile coordinate
$ \weight_i e^{h_i(x)}$ is strictly positive, so
Remark~\ref{remark:saturation-nonzero-profile} shows that saturation
does not alter this condition. Hence
\[
x\in\Crit(\mixturedensity)
\quad\Longleftrightarrow\quad
(x,\profileMap(x))
\in
\RidgeCorr(f_1,\ldots,f_k).
\]
Finally, because $\graphprofilemap$ is the graph of a function, the
profile coordinate is uniquely determined by $x$ . The indicated
projection is therefore a bijection with the stated inverse.
\end{proof}

\subsection{Ridgeline loci in the $x$ -space}
 
\begin{definition}[Ridgeline loci]
The \emph{complex ridgeline locus}
$\mathcal R(f_1,\ldots,f_k)$
is
the image of 
\[\xprojection
    {\RidgeCorr(\selectthefs)}
    {\CC^d}.
\]
The image of  
\[\xprojection
    {
    \RidgeCorr(\selectthefs)
    \cap
    \left(
    \RR^d\times\positiveprofiles
    \right)
    }
    {\RR^d}
\]
is the \demph{positive ridgeline locus}
$\mathcal R_{>0}(\selectthefs)$.

\end{definition}

Because 
$    \RidgeCorr(\selectthefs)
\subset \CC^d\times\PP_{\CC}^{k-1}$
and
$ \PP_{\CC}^{k-1}$ is projective, 
$ \mathcal R(f_1,\ldots,f_k)$ is a closed algebraic subset of
$ \CC^d$ .
We call $\mathcal R(f_1,\ldots,f_k)$ the \demph{ridgeline variety}.

We get the following containment.
\begin{corollary}[Ridgeline containment]
\label{cor:ridgeline-containment}
For every $\weight\in\OpenSimplex$ ,
\[
\operatorname{Modes}(\mixturedensity)
\subseteq
\Crit(\mixturedensity)
\subseteq
\mathcal R_{>0}(f_1,\ldots,f_k).
\]
\end{corollary}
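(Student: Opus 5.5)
The first inclusion is immediate. Every mode, meaning a local maximum, of the differentiable function $\mixturedensity$ on the open set $\RR^d$ is a critical point. So $\operatorname{Modes}(\mixturedensity)\subseteq\Crit(\mixturedensity)$ needs only the first-order necessary condition for an interior extremum. This applies because $\mixturedensity$ is $C^1$; in the polynomial--exponential setting it is even real analytic.

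For the second inclusion I would invoke Proposition~\ref{prop:critical-point-incidence}. Let $x\in\Crit(\mixturedensity)$. That proposition gives $(x,\profileMap(x))\in\RidgeCorr(\selectthefs)$.

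Next I would check that this point also lies in $\RR^d\times\positiveprofiles$. The first coordinate $x$ is real. The profile $\profileMap(x)=[\weight_1e^{h_1(x)}:\cdots:\weight_ke^{h_k(x)}]$ has the representative $(\weight_1e^{h_1(x)},\ldots,\weight_ke^{h_k(x)})$. Each coordinate is a product of a positive weight and a positive exponential, since $h_i$ has real coefficients and $x$ is real. So this representative has all coordinates positive, and $\profileMap(x)\in\positiveprofiles$ by definition. Hence $(x,\profileMap(x))\in\RidgeCorr(\selectthefs)\cap(\RR^d\times\positiveprofiles)$.

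Applying $\pi_x$ sends this point to $x$. By the definition of the positive ridgeline locus, $x\in\mathcal R_{>0}(\selectthefs)$.

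There is no real obstacle here. The only point needing care is to match the ambient spaces: the real profile graph must be regarded inside $\CC^d\times\PP_{\CC}^{k-1}$ through the stated inclusions, so that membership in $\RidgeCorr$ makes sense. The saturation issue was already handled inside Proposition~\ref{prop:critical-point-incidence} via Remark~\ref{remark:saturation-nonzero-profile}. It is worth noting that the right-hand side $\mathcal R_{>0}(\selectthefs)$ does not depend on $\weight$, which is what makes the containment useful uniformly over all $\weight\in\OpenSimplex$.
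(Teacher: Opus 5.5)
Your proposal is correct and follows the paper's own proof: the first inclusion comes from the first-order condition at a local maximum, and the second comes from Proposition~\ref{prop:critical-point-incidence} followed by projecting $(x,\profileMap(x))$ to $x$. Your explicit check that the representative $(w_1e^{h_1(x)},\ldots,w_ke^{h_k(x)})$ has all coordinates positive just spells out the step the paper states in one line.
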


\begin{proof}
Every local maximum of the differentiable function
$ \mixturedensity$ is a critical point. By
Proposition~\ref{prop:critical-point-incidence}, every critical point
has a positive projective profile and therefore lies in the positive
ridgeline locus.
\end{proof}

\begin{remark}
The terminology of \emph{ridgeline} in our definitions is inspired by the work of Ray and Lindsay~\cite{Ray-Lindsay}, who  
introduced the \emph{ridgeline manifold} to study the critical points of Gaussian mixtures. In the Gaussian case, the closure of our positive ridgeline locus coincides with their ridgeline manifold, while the ridgeline correspondence used here retains the projective profile coordinates rather than immediately projecting to the sample space. 
\end{remark}

\subsection{Example: two univariate Gaussian components}

Consider the univariate Gaussian mixture
\begin{equation}
\label{eq:two-univariate-gaussian-mixture}
f(x)
=
w_1
\frac{1}{\sqrt{2\pi\sigma_1^2}}
\exp\left(
-\frac{(x-\mu_1)^2}{2\sigma_1^2}
\right)
+
w_2
\frac{1}{\sqrt{2\pi\sigma_2^2}}
\exp\left(
-\frac{(x-\mu_2)^2}{2\sigma_2^2}
\right),
\end{equation}
where
\[
w_1,w_2>0,
\qquad
w_1+w_2=1.
\]
The $i$ -th component has mean $\mu_i$ and variance
$ \sigma_i^2$ . Write
\[
h_i(x)
=
-\frac{(x-\mu_i)^2}{2\sigma_i^2}
-\frac12\log(2\pi\sigma_i^2),
\qquad
i=1,2.
\]
Then $f_i=e^{h_i}$ , and
\[
h_i'(x)
=
-\frac{x-\mu_i}{\sigma_i^2}.
\]

The ridgeline correspondence is
\begin{align}
\RidgeCorr(f_1,f_2)
=
\biggl\{
(x,[\profilePath_1:\profilePath_2])
\in
\CC\times\PP_{\CC}^1:
\profilePath_1h_1'(x)
+
\profilePath_2h_2'(x)
=
0
\biggr\}.
\label{eq:two-gaussian-ridgeline-correspondence}
\end{align}
Its defining equation may be written as
\[
\profilePath_1
\frac{x-\mu_1}{\sigma_1^2}
+
\profilePath_2
\frac{x-\mu_2}{\sigma_2^2}
=
0.
\]
Whenever
\[
\frac{\profilePath_1}{\sigma_1^2}
+
\frac{\profilePath_2}{\sigma_2^2}
\neq 0,
\]
we can solve for $x$ :
\begin{equation}
x
=
\frac{
\profilePath_1\sigma_2^2\mu_1
+
\profilePath_2\sigma_1^2\mu_2
}{
\profilePath_1\sigma_2^2
+
\profilePath_2\sigma_1^2
}
=
\frac{
\profilePath_1\sigma_2^2
}{
\profilePath_1\sigma_2^2
+
\profilePath_2\sigma_1^2
}
\mu_1
+
\frac{
\profilePath_2\sigma_1^2
}{
\profilePath_1\sigma_2^2
+
\profilePath_2\sigma_1^2
}
\mu_2.
\label{eq:two-gaussian-ridgeline-parameterization}
\end{equation}
If
$ \profilePath_1,\profilePath_2>0$ , the two coefficients on the
right-hand side are positive and sum to one. Therefore, if
$ \mu_1\neq\mu_2$ , every point in the positive ridgeline locus lies
strictly between the two component means. In this case,
\[
\mathcal R_{>0}(f_1,f_2)
=
\left(
\min\{\mu_1,\mu_2\},
\max\{\mu_1,\mu_2\}
\right).
\]
For fixed weights $(w_1,w_2)$ , the profile map is
\begin{equation}
\label{eq:two-gaussian-profile-map}
\profileMap(x)
=
[
w_1f_1(x):
w_2f_2(x)
].
\end{equation}
The critical points of the mixture are precisely the
values of $x$ for which
\[
\left(
x,
[
w_1f_1(x):
w_2f_2(x)
]
\right)
\in
\RidgeCorr(f_1,f_2).
\]

Figure~\ref{fig:two-gaussian-ridgeline-profile} illustrates this
incidence construction.
The blue curve is the plot of the mixture density. In the profile-coordinate plot, the orange
curve represents the ridgeline correspondence and the green curve
represents the profile graph. 
The $x$ -coordinates of the
intersection points of the orange and green curves are precisely the
critical points of the blue mixture density. The graph of the mixture
then distinguishes which of these critical points are local maxima and
which are local minima.

\begin{figure}[htb!]
    \centering
\includegraphics[width=0.5\linewidth]{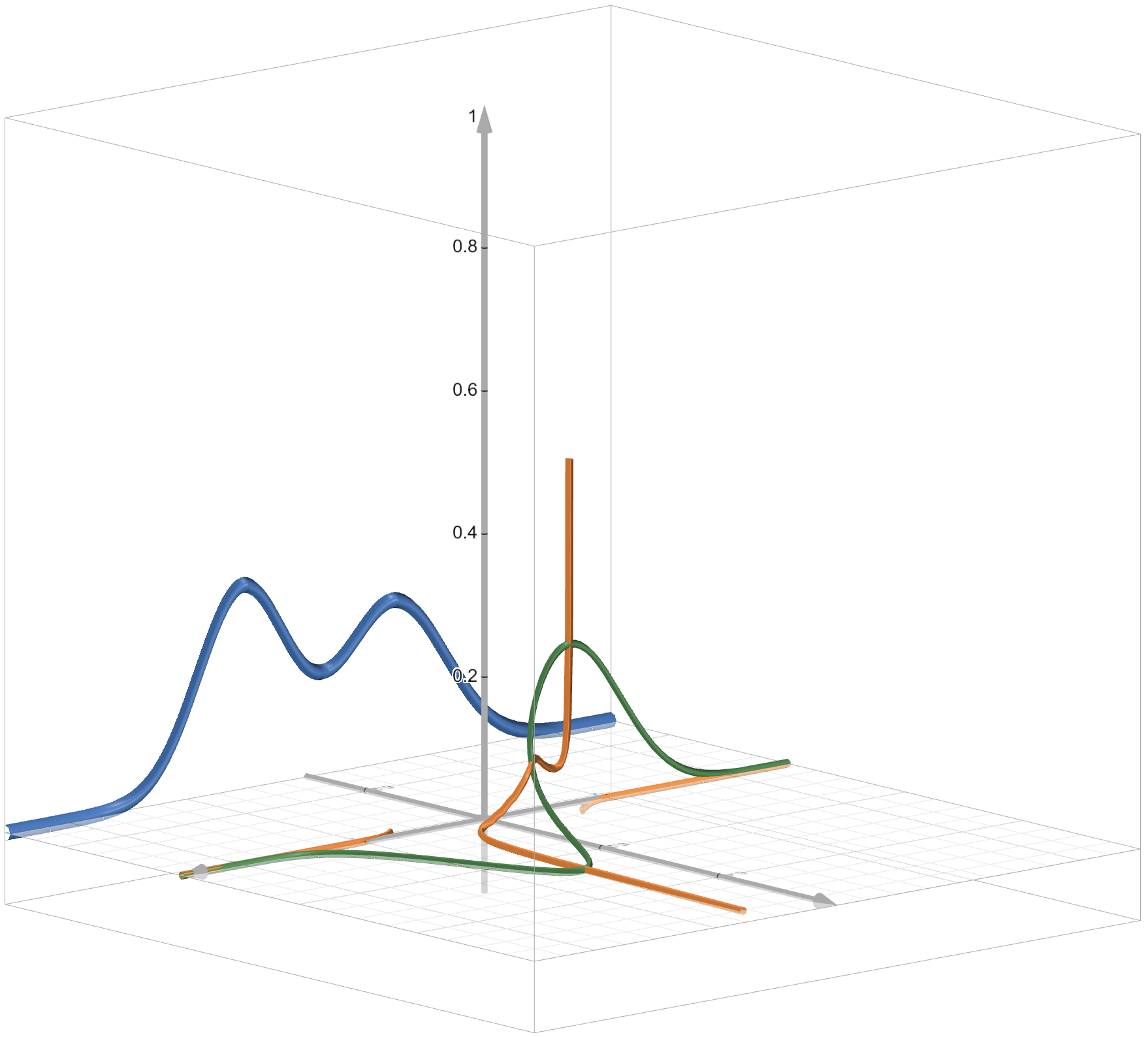}    
    \caption{The ridgeline correspondence and profile graph for a
    mixture of two univariate Gaussian densities. Intersections of the
    ridgeline correspondence and the profile graph encode the critical
    points of the mixture.}
    \label{fig:two-gaussian-ridgeline-profile}
\end{figure}

\section{Analytic and transcendence degree background} \label{sec:analytic-transcendence-ingredients} 
This section recalls the analytic and transcendence degree results needed for the proof of the ridgeline finiteness criterion. In particular, we recall the power-series form of Ax's theorem and derive the one-variable analytic consequence used in the sequel.

\subsection{Analytic paths in the incidence locus} 

We now describe the profile coordinates along a real-analytic path in the incidence locus. 
We then introduce the field of real-analytic
functions on an interval and derive the one-variable consequence of Ax's theorem needed in the sequel.

        Suppose that
\[
\gamma:
\interval
\longrightarrow
\RidgeCorr(f_1,\ldots,f_k)
\cap
\graphprofilemap
\subseteq
\RR^d\times\positiveprofiles
\]
is a real-analytic path. Write
\[
\gamma(t)
=
\bigl(x(t),\profilePath(t)\bigr),
\]
where
$x:
\interval
\longrightarrow
\RR^d$
and
$
\profilePath:
\interval
\longrightarrow
\OpenSimplex
$

Since $\profilePath(t)=\profileMap(x(t))$ , 
we have
\begin{equation}
\label{eq:normalized-profile-along-path}
\profilePath_i(t)
=
\frac{\weight_i f_i(x(t))}
{\mixturedensity(x(t))},
\qquad
i=1,\ldots,k.
\end{equation}

\begin{example}
In the polynomial exponential setting  
$f_i=e^{h_i}$, and $h_i\in \RR[x_1,\dots,x_d]$.
Set $v_i:=h_i\circ x\in \OI$.
Then
\[
\profilePath_i
=
\frac{\weight_i e^{v_i}}
{\sum_{\ell=1}^k\weight_\ell e^{v_\ell}},
\qquad i=1,\dots,k
\]
In particular, since $\beta_k(t)>0$ for all $t$,
we have
\begin{equation}
\label{eq:profile-ratio-exponential-relation}
\frac{\profilePath_i}{\profilePath_k}
=
\frac{\weight_i}{\weight_k}
 e^{v_i-v_k}
=
\frac{\weight_i}{\weight_k}
\exp\bigl((h_i-h_k)\circ x\bigr),
\qquad
i=1,\ldots,k-1.
\end{equation}
These identities connect the algebraic profile coordinates of the
ridgeline correspondence with exponentials of analytic functions.
\end{example}

\subsection{Fields of real-analytic functions} 

Let $\interval\subseteq\RR$ be a nonempty open interval,
and fix a base point $t_0\in\interval$.
Let $\OI$ be the ring of
real-valued real-analytic functions on $\interval$.
This is an
integral domain, and  we write
$\MI$
for its fraction field.

Let $z$ be a formal variable. Taylor expansion at $t_0$ defines an
injective $\RR$-algebra homomorphism
\begin{equation}
\label{eq:taylor-embedding}
T_{t_0}:
\OI
\longrightarrow
\RR[[z]],
\qquad
f
\longmapsto
\sum_{m=0}^{\infty}
\frac{f^{(m)}(t_0)}{m!}z^m.
\end{equation}
Indeed, if $T_{t_0}(f)=0$, then $f$ vanishes on a neighborhood of
$t_0$ and hence, by the identity theorem for analytic functions, on all of $\interval$.

    Since $T_{t_0}$ is injective and $\RR[[z]]$ is an integral domain,
it extends uniquely to an embedding of fraction fields
\begin{equation}
\label{eq:analytic-field-laurent-embedding}
\MI
\hookrightarrow
\Frac\bigl(\RR[[z]]\bigr)
=:
\RR((z)).
\end{equation}
After adjoining $\sqrt{-1}$ , the Taylor embedding extends
to
\begin{equation}
\label{eq:complexified-analytic-field-embedding}
\MIC
=
\MI(\sqrt{-1})
\hookrightarrow
\CC((z)).
\end{equation}
We use this embedding to regard the complex fields generated by the
analytic functions below as subfields of $\CC((z))$ .

By properties of Taylor expansion, 
for every $u\in\OI$,
\begin{equation}
\label{eq:taylor-differentiation}
T_{t_0}(u')
=
\frac{d}{dz}T_{t_0}(u)
\end{equation}
and
\begin{equation}
\label{eq:taylor-exponentiation}
T_{t_0}(e^u)=
\exp\bigl(T_{t_0}(u)\bigr).
\end{equation}
For an analytic path
$x:
\interval
\longrightarrow
\RR^d,$
the polynomial functions restrict to analytic functions
\begin{equation}
v_i(t)
:=
h_i(x(t))
\in
\OI.
\end{equation}

We now recall the definition of transcendence degree.
\begin{definition}[Algebraic independence and transcendence degree]
\label{def:transcendence-degree}
Let $E\subseteq F$ be a field extension. Elements
$u_1,\ldots,u_m\in F$
are \demph{algebraically independent over $E$} if there is no nonzero
polynomial
$P\in E[\indet_1,\ldots,\indet_m]$
such that
$P(u_1,\ldots,u_m)=0.$
Otherwise, they are \demph{algebraically dependent over $E$}.

A subset $B\subseteq F$ is a \demph{transcendence basis} for $F$ over
$E$ if $B$ is algebraically independent over $E$ and the extension
$E(B) \subseteq F$ is algebraic. Any two transcendence bases have the same
cardinality. This cardinality is the \demph{transcendence degree} of
$F$ over $E$ and is denoted by
\[
\trdeg_E F.
\]
\end{definition}

\begin{example}[Nonconstant analytic functions]
\label{ex:nonconstant-analytic-functions}
If $v\in\OI$ is nonconstant, then $v$ is transcendental over $\RR$.
Indeed, if $v$ satisfies a nonzero polynomial $g\in\RR[X]$, then
\[
v(\interval)
\subseteq
\{a\in\RR:g(a)=0\}.
\]
The set on the right is finite, whereas the continuous image
$v(\interval)$ is connected. Hence $v(\interval)$ must be a
singleton, which shows that $v$ is constant. Since $\RR(v)$ is
generated by one transcendental element,
\[
\trdeg_{\RR}\RR(v)=1.\vspace{-12pt}
\] 
\end{example}

\subsection{The power-series Ax theorem}

We use the following power-series form of Ax's theorem
\cite[Corollary 1]{Ax-1971-On-Schanuel-Conjectures}. It is stated over $\CC$, as in
the standard power-series formulation. The real-analytic consequence
below follows by complexifying the Taylor-series field.

\begin{theorem}[Power-series Ax theorem]
\label{thm:ax-power-series}
Let
$u_1,\ldots,u_s
\in
\CC[[t_1,\ldots,t_r]]$
be formal power series with zero constant term. Suppose that
$u_1,\ldots,u_s$ are linearly independent over $\QQ$. Then
\begin{equation}
\label{eq:ax-power-series}
\trdeg_{\CC}
\CC
\bigl(
u_1,\ldots,u_s,e^{u_1},\ldots,e^{u_s}\bigr)
\geq
s+
\rank
\left(
\frac{\partial u_i}{\partial t_j}
\right)_{
\substack{1\leq i\leq s\\1\leq j\leq r}
},
\end{equation}
where the rank is computed over the fraction field of
$\CC[[t_1,\ldots,t_r]]$.
\end{theorem}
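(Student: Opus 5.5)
Since this is Ax's theorem \cite[Corollary 1]{Ax-1971-On-Schanuel-Conjectures}, the paper simply invokes it. If I were to prove it, I would follow Ax's differential-algebraic route. I would replace the power-series ring by the differential field $K=\Frac(\CC[[t_1,\ldots,t_r]])$, equipped with the commuting derivations $D_j=\partial/\partial t_j$, whose common field of constants is $\CC$. Inside $K$ I would consider the subfield
\[
F=\CC\bigl(u_1,\ldots,u_s,e^{u_1},\ldots,e^{u_s}\bigr).
\]
Writing $y_i=u_i$ and $z_i=e^{u_i}$, the only input from analysis is the identity $D_j z_i=z_i\,D_j y_i$. Via \eqref{eq:taylor-differentiation} and \eqref{eq:taylor-exponentiation}, this says that in the module of K\"ahler differentials $\Omega_{F/\CC}$ the relations
\[
dy_i-\frac{dz_i}{z_i}=0,\qquad i=1,\ldots,s,
\]
hold after pushing forward along the derivations $D_j$. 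The goal is then the purely algebraic statement: if $\trdeg_{\CC}F<s+\rank(D_jy_i)$, then some nontrivial $\QQ$-linear combination $\sum c_iy_i$ lies in $\CC$. Since each $u_i$ has zero constant term, such a combination would be $0$, contradicting the $\QQ$-linear independence hypothesis.

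For the dimension count, I would use that $\dim_F\Omega_{F/\CC}=\trdeg_{\CC}F=:n$ (characteristic zero, finitely generated). The derivations $D_1,\ldots,D_r$ restrict to derivations $F\to K$ and therefore induce a $K$-linear map $\Omega_{F/\CC}\otimes_FK\to K^r$. Under this map, the images of $dy_1,\ldots,dy_s$ span a space of dimension $\rho:=\rank(\partial u_i/\partial t_j)$. Let $W\subseteq\Omega_{F/\CC}$ be the kernel of this map restricted to $F$-forms. Then $\dim W\le n-\rho$. The forms $\omega_i:=dy_i-dz_i/z_i$ all lie in $W$. If $n<s+\rho$, then the $s$ forms $\omega_i$ live in a space of dimension less than $s$, so they satisfy a nontrivial $F$-linear relation. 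By a standard minimality/descent argument (take a relation with minimal support, normalize one coefficient to $1$, and apply the derivations to show the other coefficients are constants), this improves to a nontrivial $\CC$-linear relation
\[
\sum_i a_i\,dy_i=\sum_i a_i\,\frac{dz_i}{z_i}
\]
holding in $\Omega_{F/\CC}$ modulo the kernel. Restricting to a suitable $\CC$-subspace then makes the relation genuinely hold in $\Omega_{F/\CC}$.

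The main obstacle is the last step. From a $\CC$-linear relation saying that a combination of logarithmic differentials $dz_i/z_i$ is exact, I must extract a $\QQ$-linear relation among the $y_i$ modulo constants. This is the Rosenlicht--Ax lemma on logarithmic differentials. My plan is as follows. First, choose a $\QQ$-basis of the $\QQ$-span of the $a_i$ and split the relation into components, each with rational coefficients. Clearing denominators gives $\sum c_i\,dz_i/z_i=d(\sum c_iy_i)$ with $c_i\in\ZZ$, that is, $d\log\bigl(\prod z_i^{c_i}\bigr)=d\bigl(\sum c_iy_i\bigr)$. Second, pass to a smooth projective model of (a function field containing) $F$, or equivalently work with residues along divisors. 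The key fact is that $d\log g$ has simple poles with integer residues, while an exact form $dh$ has no residues. Hence all residues vanish and $\prod z_i^{c_i}$ is a unit, hence constant up to the exponential relation. On the power-series side this forces $\sum c_iu_i$ to be constant, hence $0$. I expect this residue/descent argument, together with making the reduction from $F$-linear to $\CC$-linear to $\QQ$-linear relations rigorous in several variables, to be the delicate part. This is exactly what Ax handles in his Theorem 3, so in the paper I would cite it rather than reprove it.
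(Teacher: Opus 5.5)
The paper gives no proof of this theorem; it quotes Ax's Corollary~1, which is what you say you would do, so your bottom line agrees with the paper. Your outline of Ax's argument has the right architecture: a dimension count forcing an $F$-linear dependence among the closed forms $\omega_i=dy_i-dz_i/z_i$, descent to a $\CC$-linear dependence, then residues. However, two steps fail as written.

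First, the descent. $F$ is not stable under the $D_j$, so you cannot apply the derivations to a relation $\sum_i f_i\omega_i=0$ in $\Omega_{F/\CC}$. The $D_jf_i$ leave $F$, and $D_j$ does not act on $\Omega_{F/\CC}$. Some genuine input is needed here, because the purely algebraic version is false: $dx$ and $x\,dx$ in $\Omega_{\CC(x)/\CC}$ are closed forms of the shape $dy-dz/z$ (with $z=1$) that are $\CC(x)$-dependent but $\CC$-independent.

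The repair goes as follows. Push the relation into $\Omega_{K/\CC}$; the map $\Omega_{F/\CC}\otimes_FK\to\Omega_{K/\CC}$ is injective in characteristic zero. Choose a relation $\sum_i g_i\omega_i=0$ with $g_i\in K$ of minimal support, normalized so that one coefficient is $1$. Then apply the Lie derivative $L_{D_j}=\iota_{D_j}\circ d+d\circ\iota_{D_j}$, which kills each $\omega_i$ exactly because $d\omega_i=0$ and $\iota_{D_j}\omega_i=D_jy_i-D_jz_i/z_i=0$. The result $\sum_i D_j(g_i)\omega_i=0$ has smaller support, so all $g_i$ are $D_j$-constants, i.e.\ lie in $\CC$. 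This gives $\sum_ia_i\omega_i=0$ with $a_i\in\CC$ not all zero, holding on the nose in $\Omega_{F/\CC}$. There is no kernel to quotient by and no restriction step.

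Second, you cannot split the $\CC$-linear relation into rational components before taking residues. Write $a_i=\sum_kc_{ik}e_k$ with $c_{ik}\in\QQ$ and the $e_k$ linearly independent over $\QQ$. The relation then reads $\sum_ke_k\eta_k=0$ with $\eta_k=\sum_ic_{ik}\omega_i$. The $e_k$ are just scalars in $F$, so nothing forces $\eta_k=0$.

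Instead, let $N$ clear denominators and set $Z_k:=\prod_iz_i^{Nc_{ik}}$. Take residues of the unsplit identity $N\,d\bigl(\sum_ia_iy_i\bigr)=\sum_ke_k\,dZ_k/Z_k$ along each prime divisor $P$ of a normal projective model of $F$. Exact forms have zero residue and $dZ/Z$ has residue $v_P(Z)$, so this gives $\sum_ke_k\,v_P(Z_k)=0$ with $v_P(Z_k)\in\ZZ$. Only at this point does $\QQ$-independence of the $e_k$ split the relation, giving $v_P(Z_k)=0$ for all $P$ and $k$. Each $Z_k$ is then a rational function without zeros or poles on a normal projective variety, hence lies in $\CC^\times$. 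Therefore $D_j\bigl(\sum_ic_{ik}u_i\bigr)=D_jZ_k/(NZ_k)=0$ for all $j$, and the zero constant term forces $\sum_ic_{ik}u_i=0$. Taking $k$ with $(c_{ik})_i\neq0$ contradicts the $\QQ$-linear independence of the $u_i$. With these two repairs your sketch becomes a complete proof of Ax's theorem.
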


\begin{example}
\label{ex:ax-z-z2-z3}
Let $z$ be a formal variable and 
consider the $s=3$ formal power series

\[
u_1=z,
\qquad
u_2=z^2,
\qquad
u_3=z^3
\]
in $\CC[[z]]$.
These series have zero constant term and are linearly independent over
$\QQ$. Their Jacobian matrix is the nonzero column
\[
\begin{pmatrix}
1\\
2z\\
3z^2
\end{pmatrix},
\]
which has rank one. 
Applying
Theorem~\ref{thm:ax-power-series} gives
\[
\trdeg_{\CC}
\CC\bigl(z,z^2,z^3,e^z,e^{z^2},e^{z^3}\bigr)
\geq
3+1 = 4.
\]
    Since $z^2,z^3\in\CC(z)$ , we have
$
\CC\bigl(
z,z^2,z^3,e^z,e^{z^2},e^{z^3}
\bigr)
=
\CC\bigl(
z,e^z,e^{z^2},e^{z^3}
\bigr).
$
The field on the right is generated over $\CC$ by four elements, so
its transcendence degree is at most $4$ . Therefore,
$\trdeg_{\CC}
\CC\bigl(
z,e^z,e^{z^2},e^{z^3}
\bigr)
=
4.$ \vspace{-5pt}
\end{example}

    \subsection{A one-variable analytic consequence}

\begin{proposition}[One-variable Ax consequence] \label{prop:one-variable-ax} 
Let $v_1,\ldots,v_s\in\OI$, 
with $s\geq1.$ 
Suppose that 
$v_1',\ldots,v_s'$ 
are linearly independent over $\QQ$ . Then \begin{equation} \label{eq:ax-one-variable} \trdeg_{\CC} \CC(v_1,\ldots,v_s,e^{v_1},\ldots,e^{v_s}) \geq s+1. \end{equation} \end{proposition}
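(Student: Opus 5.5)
We reduce to Theorem~\ref{thm:ax-power-series} with $r=1$ by passing through the Taylor embedding $T_{t_0}$ of \eqref{eq:complexified-analytic-field-embedding}, after normalizing constant terms. Set $c_i:=v_i(t_0)\in\RR$ and $u_i:=T_{t_0}(v_i)-c_i\in\CC[[z]]$. These series have zero constant term.

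\textbf{Linear independence.} We check that $u_1,\ldots,u_s$ are linearly independent over $\QQ$. Suppose $\sum q_iu_i=0$ with $q_i\in\QQ$. Differentiating and using \eqref{eq:taylor-differentiation} gives $T_{t_0}\bigl(\sum q_iv_i'\bigr)=0$. Injectivity of $T_{t_0}$ then yields $\sum q_iv_i'=0$, so all $q_i=0$ by hypothesis.

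\textbf{Rank of the Jacobian.} The Jacobian is the single column $(u_1',\ldots,u_s')^\top=(T_{t_0}(v_1'),\ldots,T_{t_0}(v_s'))^\top$. It is nonzero, since otherwise $v_1'=0$, contradicting linear independence (here $s\ge1$ is used). Hence its rank over $\CC((z))$ is $1$, and Theorem~\ref{thm:ax-power-series} gives
\[
\trdeg_{\CC}\CC\bigl(u_1,\ldots,u_s,e^{u_1},\ldots,e^{u_s}\bigr)\ \geq\ s+1 .
\]

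\textbf{Transfer back to $v_i$.} By \eqref{eq:taylor-exponentiation}, $T_{t_0}(e^{v_i})=\exp(u_i+c_i)=e^{c_i}e^{u_i}$, and $e^{c_i}\in\CC^\times$. So inside $\CC((z))$ the field $\CC(u_i,e^{u_i})_{i}$ coincides with the image of $\CC(v_i,e^{v_i})_i$, since adding the constants $c_i$ and rescaling by $e^{c_i}$ does not change the generated field over $\CC$.

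It remains to see that the embedding preserves transcendence degree over $\CC$. This is the one point needing care: $\CC(v_1,\ldots,e^{v_s})$ should be read as the subfield of $\MIC$ generated over $\CC$, where $\CC=\RR(\sqrt{-1})$ sits inside $\MIC$ as constants. Since \eqref{eq:complexified-analytic-field-embedding} is an injective field homomorphism restricting to the identity on $\CC$, it is a $\CC$-isomorphism onto its image. Transcendence degree is invariant under such isomorphisms, which gives \eqref{eq:ax-one-variable}.

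The main obstacle is bookkeeping rather than substance. The two points that need explicit verification are the constant-term normalization, handled via the scalars $e^{c_i}$, and the well-definedness of the complexified embedding as a $\CC$-linear field embedding.
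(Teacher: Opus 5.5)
Your proposal is correct and follows essentially the same route as the paper: center the constant terms, get $\QQ$-linear independence by differentiating, note that the one-column Jacobian has rank one, apply Theorem~\ref{thm:ax-power-series}, and transfer back through the $\CC$-isomorphism given by the Taylor embedding. The only cosmetic difference is the order of the first steps. The paper subtracts $v_i(t_0)$ inside $\OI$ first and then applies $T_{t_0}$, whereas you apply $T_{t_0}$ first and then subtract the constants in $\CC[[z]]$.
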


\begin{proof}
For $i=1,\ldots,s$ , set
\[
u_i
:=
v_i-v_i(t_0)\]
so that
$ u_i(t_0)=0$ . Then
\begin{equation}
\label{eq:centered-complex-exponential-fields}
\CC
\bigl(
v_1,\ldots,v_s,
 e^{v_1},\ldots,e^{v_s}
\bigr)
=
\CC
\bigl(
u_1,\ldots,u_s,
 e^{u_1},\ldots,e^{u_s}
\bigr).
\end{equation}
because
$v_i=u_i+v_i(t_0)$
$e^{v_i}=e^{v_i(t_0)}e^{u_i}.$
Moreover, the functions $u_1,\ldots,u_s$ are linearly independent over
$ \QQ$ : 
if
\[
\sum_{i=1}^s q_i u_i=0,
\qquad
q_i\in\QQ,
\]
then differentiation gives
$\sum_{i=1}^s q_i v_i'=0$
thereby implying every $q_i$ is zero by the hypothesis.

Using the Taylor embedding $T_{t_0}$, 
set the formal series
\[
\widehat u_i
:=
T_{t_0}(u_i)
\in
z\RR[[z]]
\subseteq
z\CC[[z]].
\]
Since $T_{t_0}$ is injective, 
$ \widehat u_1,\ldots,\widehat u_s$ are linearly independent over
$ \QQ$ . 

    By \eqref{eq:taylor-differentiation}, the Jacobian of these formal
power series is the $s\times1$ matrix
\[
\begin{pmatrix}
\dfrac{d\widehat u_1}{dz}\\[1ex]
\vdots\\[1ex]
\dfrac{d\widehat u_s}{dz}
\end{pmatrix}
=
\begin{pmatrix}
T_{t_0}(u_1')\\
\vdots\\
T_{t_0}(u_s')
\end{pmatrix}
=
\begin{pmatrix}
T_{t_0}(v_1')\\
\vdots\\
T_{t_0}(v_s')
\end{pmatrix}.
\]
This matrix is nonzero because the assumed $\QQ$ -linear
independence of $v_1',\ldots,v_s'$ implies that not
all of these functions vanish identically. 
Therefore, the matrix has rank one.
Applying \Cref{thm:ax-power-series},
we get
\begin{equation}
\label{eq:complexified-ax-bound}
\trdeg_{\CC}
\CC\bigl(
\widehat u_1,\ldots,\widehat u_s,
e^{\widehat u_1},\ldots,e^{\widehat u_s}
\bigr)
\geq
s+1,
\end{equation}
and the result follows
from the fact
the Taylor embedding restricts to an isomorphism from 
$\CC(u_1,\ldots,u_s,e^{u_1},\ldots,e^{u_s})$
onto 
$\CC(\widehat u_1,\ldots,\widehat u_s, e^{\widehat u_1},\ldots,e^{\widehat u_s}) \subseteq \CC((z)).$
and \eqref{eq:centered-complex-exponential-fields}.
\end{proof}

\section{The Ax--ridgeline bridge}
\label{sec:ax-ridgeline-bridge}

    The ridgeline correspondence $\RidgeCorr(f_1,\ldots,f_k)$ is algebraic, whereas the profile graph $\graphprofilemap$ is real analytic. 
    By \Cref{prop:critical-point-incidence}, their intersection encodes the critical points of the mixture. 
    Thus, an infinite critical set leads us to consider real-analytic arcs along which polynomial equations and exponential relations hold simultaneously. 
    In this section, we combine the finite-fiber geometry of the ridgeline correspondence with the transcendence degree estimate in 
    \Cref{prop:one-variable-ax}. 
    The finite-fiber condition gives an upper bound on the transcendence degree of the functions along such an arc, while Ax's theorem gives a contradictory lower bound.

\begin{lemma}[Zariski closures under finite-fiber projection]
\label{lem:zariski-closure-finite-fiber}
Let
$\closedalgebraicset
\subseteq
\CC^d\times\CC^m$
be a closed algebraic set, and let
$V
\subseteq
\CC^m$
be a Zariski-open set such that the coordinate projection
\[
\pi:
\closedalgebraicset
\cap
\bigl(\CC^d\times V\bigr)
\longrightarrow
V,
\qquad
(x,r)
\longmapsto
r,
\]
has finite fibers.
Consider the real-analytic maps
$\xPath:
\interval
\longrightarrow
\RR^d$
and
$\rho:
\interval
\longrightarrow
\RR^m$
such that
\[
\bigl(\xPath(t),\rho(t)\bigr)
\in
\closedalgebraicset
\text{ for every }t\in\interval,
\]
and
$\rho(\interval)
\subseteq
V.$
Then, the field extension
$\CC(\rho_1,\ldots,\rho_m)
\subseteq
\CC(
\xPath_1,\ldots,\xPath_d,
\rho_1,\ldots,\rho_m
)$ is finite.
\end{lemma}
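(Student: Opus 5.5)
It suffices to show that each coordinate $\xPath_j$ is algebraic over $K:=\CC(\rho_1,\ldots,\rho_m)\subseteq\MIC$. The extension $K\subseteq K(\xPath_1,\ldots,\xPath_d)$ is finitely generated, so algebraicity of the generators makes it finite. The plan is to work with the generic point of the path, i.e.\ with the prime ideal of polynomial relations satisfied by $(\xPath,\rho)$, and to turn the finite-fiber hypothesis into algebraicity through a dimension count.

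First consider the ring homomorphism $\CC[x_1,\ldots,x_d,r_1,\ldots,r_m]\to\MIC$ sending $x_j\mapsto\xPath_j$ and $r_\ell\mapsto\rho_\ell$. Its image is a domain, so its kernel $\mathfrak p$ is prime. Let $Z=\Variety(\mathfrak p)\subseteq\CC^d\times\CC^m$ be the corresponding irreducible variety. Then
\[
\dim Z=\trdeg_{\CC}\CC(\xPath,\rho), \qquad \text{and the closure of the image of } Z \text{ in } \CC^m \text{ has dimension } \trdeg_{\CC}K .
\]
Since every polynomial in the ideal of $\closedalgebraicset$ vanishes identically along the path, that ideal lies in $\mathfrak p$, and hence $Z\subseteq\closedalgebraicset$.

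Next I check that $Z$ meets $\CC^d\times V$. Write $V=\CC^m\setminus\Variety(g_1,\ldots,g_q)$. The point $\rho(t_0)$ lies in $V$, so some $g_i(\rho(t_0))\neq0$. Thus $g_i(\rho)\notin\mathfrak p$, and $Z\cap(\CC^d\times V)$ is a nonempty Zariski-open, hence dense, subset of the irreducible $Z$.

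Restricted to this open set, the projection $\pi$ has finite fibers because $Z\subseteq\closedalgebraicset$. By the fiber-dimension theorem, a dominant morphism of irreducible varieties $Z\to\overline{\pi(Z)}$ with finite generic fibers satisfies $\dim Z=\dim\overline{\pi(Z)}$. Therefore
\[
\trdeg_{\CC}\CC(\xPath,\rho)=\trdeg_{\CC}K,
\]
so $\trdeg_K K(\xPath)=0$ and each $\xPath_j$ is algebraic over $K$. Combined with finite generation, this gives finiteness of the extension.

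The main obstacle is this middle step: making rigorous the passage from ``finite fibers over $V$'' to equality of function-field transcendence degrees. One needs that the generic fiber of $Z\to\overline{\pi(Z)}$ over the open set $\pi^{-1}(V)\cap Z$ is finite, and that the function field of $Z$ is $\CC(\xPath,\rho)$, with the function field of $\overline{\pi(Z)}$ corresponding to $K$. I would argue directly instead of quoting the theorem. Suppose some $\xPath_j$ were transcendental over $K$. Then the generic fiber $Z_\eta$ over $\operatorname{Spec}$ of the function field of $\overline{\pi(Z)}$ would have positive dimension. By Chevalley/constructibility, fibers over a dense open subset of $\overline{\pi(Z)}$ would then be positive-dimensional. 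These fibers would still meet $\CC^d\times V$ on a nonempty open part, which contradicts the finite-fiber hypothesis.
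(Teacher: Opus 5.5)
Your proposal is correct and follows essentially the same route as the paper. Both arguments take the Zariski closure of the path as the zero set of the prime kernel of the evaluation map into $\MIC$, observe that it lies in $X$ and meets $\CC^d\times V$, and use finiteness of the generic fiber of the dominant projection to conclude that $\CC(\rho_1,\ldots,\rho_m)\subseteq\CC(\eta_1,\ldots,\eta_d,\rho_1,\ldots,\rho_m)$ is finite. The only differences are that you spell out the last step as equality of transcendence degrees plus finite generation, and you justify nonemptiness of $Z\cap(\CC^d\times V)$ more explicitly than the paper does.
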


\begin{proof} 
Consider the Zariski closures 
\[ 
X_\rho := \overline{\rho(\interval)}^{\,\mathrm{Zar}} \subseteq \CC^m  
\quad \text{ and }\quad  
 X_{\xPath,\rho} := \overline{ \left\{ \bigl(\xPath(t),\rho(t)\bigr): t\in\interval \right\} }^{\,\mathrm{Zar}} \subseteq \CC^d\times\CC^m. 
\]
The defining ideals of these varieties are the kernels of the evaluation homomorphisms  
\[ 
    \varphi_\rho: 
    \CC[y_1,\ldots,y_m] \longrightarrow \MIC, \qquad y_i \longmapsto \rho_i,  
\]
and  
\[ 
    \varphi_{\xPath,\rho}: 
        \CC[ x_1,\ldots,x_d, y_1,\ldots,y_m ] \longrightarrow \MIC,  
    \qquad x_j \longmapsto \xPath_j, \;
    y_i \longmapsto \rho_i.
\] 
 
Since $\MIC$ is a field, both kernels are prime. 
Hence $X_\rho$ and $X_{\xPath,\rho}$ are irreducible. 
Because  
\[ \bigl(\xPath(t),\rho(t)\bigr) \in \closedalgebraicset \qquad \text{for every }t\in\interval  
\] and $\closedalgebraicset$ is Zariski closed, we have  
    $X_{\xPath,\rho} \subseteq \closedalgebraicset.$ 
The coordinate projection restricts to a dominant morphism  
\[ 
    \pi_{\xPath,\rho}: 
        X_{\xPath,\rho} \longrightarrow X_\rho,
        \qquad (x,r) \longmapsto r. 
\] 
Indeed, the image contains 
$\rho(\interval)$, 
which is Zariski dense in $X_\rho$. 
Set  
\[ X_\rho^\circ := X_\rho\cap V  \qquad\text{ and  }\qquad
 X_{\xPath,\rho}^\circ := X_{\xPath,\rho} \cap \bigl( \CC^d\times X_\rho^\circ \bigr). 
\] 
Since $\rho(\interval)\subseteq V$, these are nonempty dense open subsets of $X_\rho$ and $X_{\xPath,\rho}$, respectively. 
For every $r\in X_\rho^\circ$, the fiber of  
\[ X_{\xPath,\rho}^\circ \longrightarrow X_\rho^\circ  
\] is contained in the fiber of  
$ \closedalgebraicset \cap \bigl( \CC^d\times V \bigr) \longrightarrow V  
$ over $r$. 
It is therefore finite. 
Hence the generic fiber of  
\[ X_{\xPath,\rho} \longrightarrow X_\rho  
\] is finite, and the induced extension of rational function fields  
\[ \CC(X_\rho) \subseteq \CC(X_{\xPath,\rho})  
\] is finite. 
Finally, 
we have
$\CC[X_\rho] \cong \CC[\rho_1,\ldots,\rho_m] $ and
$\CC[X_{\xPath,\rho}] \cong \CC[ \xPath_1,\ldots,\xPath_d, \rho_1,\ldots,\rho_m ]. $
Therefore
\[ \CC(X_\rho) \cong \CC(\rho_1,\ldots,\rho_m) \quad \text{ and } \quad  \CC(X_{\xPath,\rho}) \cong \CC( \xPath_1,\ldots,\xPath_d, \rho_1,\ldots,\rho_m ). 
\] 
Under these identifications, the induced finite extension is  
\[ \CC(\rho_1,\ldots,\rho_m) \subseteq \CC( \xPath_1,\ldots,\xPath_d, \rho_1,\ldots,\rho_m ). 
 \qedhere \] 
\end{proof}

\begin{theorem}[Ridgeline Criterion for finitely many critical points]
\label{thm:criterion}
Let $k\geq1$, and for $i=1,\dots,k$ let
\[
f_i=e^{h_i},
\qquad
h_i\in\RR[x_1,\ldots,x_d],
\]
be the components of the mixture density
$\mixturedensity
=
\sum_{i=1}^{k}w_i f_i$,
with 
$w\in \OpenSimplex$.
Assume that:
\begin{enumerate}
    \item the critical set $\Crit(\mixturedensity)$ is compact; and

    \item there exists a dense Zariski-open set
    $
    \profileopenset\subseteq
    \PP_{\CC}^{k-1}$ 
    such that
    \begin{enumerate}
        \item     $ \positiveprofiles \subseteq \profileopenset $, and
        \item     the projection 
    $\betaprojection
    {
    \RidgeCorr(f_1,\ldots,f_k)
    \cap
    \bigl(\CC^d\times \profileopenset\bigr)
    }
    {\profileopenset}
    $
    has finite fibers.

    \end{enumerate}

\end{enumerate}
Then $\Crit(\mixturedensity)$ is finite.
\end{theorem}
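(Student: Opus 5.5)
We argue by contradiction: assume $\Crit(\mixturedensity)$ is infinite, produce a nonconstant real-analytic arc of critical points, and compare an upper bound on transcendence degree (from hypothesis 2 via Lemma~\ref{lem:zariski-closure-finite-fiber}) with a lower bound from Proposition~\ref{prop:one-variable-ax}.

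\emph{Step 1: an analytic arc of critical points.} By \eqref{eq:polynomial-exponential-gradient}, $\Crit(\mixturedensity)$ is the zero set of the real-analytic map $\nabla\mixturedensity$, so it is a real-analytic subset of $\RR^d$. If it is infinite, then, being compact by hypothesis 1, it has an accumulation point $p$. Hence the germ of $\Crit(\mixturedensity)$ at $p$ is not isolated. The real-analytic curve selection lemma then gives a nonconstant real-analytic path $x:\interval\to\Crit(\mixturedensity)$ on some open interval. This is the step where the analytic input, rather than algebra, is essential, and I expect it to be the main technical obstacle. The key points to verify are that compactness guarantees an accumulation point, and that curve selection applies to a non-isolated point of an analytic set defined by globally analytic equations. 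By Proposition~\ref{prop:critical-point-incidence}, the path $\gamma(t)=(x(t),\profileMap(x(t)))$ lies in $\RidgeCorr(f_1,\ldots,f_k)\cap\graphprofilemap$.

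\emph{Step 2: the upper bound.} Work in the affine chart $\{\profilePath_k\neq0\}\cong\CC^{k-1}$, which contains $\positiveprofiles$. Put $\rho_i:=\profilePath_i/\profilePath_k$ for $i<k$. By \eqref{eq:profile-ratio-exponential-relation},
\[
\rho_i=\tfrac{\weight_i}{\weight_k}e^{u_i},\qquad u_i:=(h_i-h_k)\circ x\in\OI .
\]
Apply Lemma~\ref{lem:zariski-closure-finite-fiber} with
\begin{itemize}
\item $\closedalgebraicset$ the affine part of $\RidgeCorr(f_1,\ldots,f_k)$;
\item $V=\profileopenset\cap\{\profilePath_k\neq0\}$;
\item $\xPath=x$ and $\rho=(\rho_1,\ldots,\rho_{k-1})$.
\end{itemize}
The image $\rho(\interval)\subseteq V$ by hypothesis 2(a), and the fibers are finite by 2(b). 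The lemma yields that $K:=\CC(x_1,\ldots,x_d,\rho)$ is finite over $\CC(\rho)$, so $\trdeg_\CC K=\trdeg_\CC\CC(\rho)$.

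Now choose a maximal subset $J\subseteq\{1,\ldots,k-1\}$ such that the derivatives $\{u_j'\}_{j\in J}$ are $\QQ$-linearly independent, and set $s:=|J|$. For any other index $i$ we have $u_i'=\sum_{j\in J}q_ju_j'$ with $q_j\in\QQ$. Integrating gives $u_i=\sum q_ju_j+c$ for a constant $c$, so $e^{u_i}=e^c\prod_j (e^{u_j})^{q_j}$, which is algebraic over $\CC(e^{u_j}:j\in J)$. Hence $\trdeg_\CC\CC(\rho)\le s$, and therefore $\trdeg_\CC K\le s$.

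\emph{Step 3: contradiction.} Suppose first that $s\ge1$. Each $u_j$ is a polynomial in the $x_\ell$, so it lies in $K$, and each $e^{u_j}$ equals $\tfrac{\weight_k}{\weight_j}\rho_j$, so it also lies in $K$. Proposition~\ref{prop:one-variable-ax}, applied to $\{u_j\}_{j\in J}$, gives
\[
\trdeg_\CC K\;\ge\;\trdeg_\CC\CC\bigl(u_j,e^{u_j}:j\in J\bigr)\;\ge\; s+1,
\]
which contradicts Step 2.

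Suppose instead that $s=0$; this includes the case $k=1$. Then every $u_i$ is constant, so $\CC(\rho)=\CC$ and $K$ is algebraic over $\CC$. But some coordinate $x_\ell$ is nonconstant, and by Example~\ref{ex:nonconstant-analytic-functions} it is transcendental over $\RR$. It is then also transcendental over $\CC$: a nonzero complex polynomial relation would yield a nonzero real one by taking real and imaginary parts of the coefficients, or by multiplying by the conjugate polynomial. This contradiction shows that $\Crit(\mixturedensity)$ is finite.
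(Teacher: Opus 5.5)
Your proposal is correct and follows essentially the same route as the paper. Curve selection gives a nonconstant analytic arc of critical points, Lemma~\ref{lem:zariski-closure-finite-fiber} in the chart $\beta_k\neq 0$ bounds the transcendence degree by the dimension $s$ of the $\QQ$-span of the derivatives of $(h_i-h_k)\circ x$, and Proposition~\ref{prop:one-variable-ax} supplies the contradictory lower bound $s+1$. The differences are only bookkeeping: you bound $\trdeg_{\CC}\CC(x,\rho)$ directly, and you fold $k=1$ into the case $s=0$, which you settle by the transcendence of a nonconstant coordinate rather than by the paper's argument that a continuous path in a single finite fiber is constant.
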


\begin{proof}

Fix the dense Zariski-open set $\profileopenset\subseteq\PP_{\CC}^{k-1}$ so that it satisfies~{item 2.}  

\medskip

If $k=1$, then $\PP_{\CC}^{k-1}=\PP_{\CC}^{0}$ is a point. Since
$\profileopenset$ contains its unique point, we have
$\profileopenset=\PP_{\CC}^{0}$. Thus the unique fiber of the profile
projection is $\RidgeCorr(f_1)$, which is finite by hypothesis.
Proposition~\ref{prop:critical-point-incidence} therefore implies that
$\Crit(\mixturedensity)$ is finite. We may henceforth assume that
$k\geq2$.

Suppose, toward a contradiction, that
$ \Crit(\mixturedensity)$ is infinite. 
Since it is compact, it has an accumulation point $x_0\in\Crit(\mixturedensity).$
The mixture  $\mixturedensity$ is real analytic. Hence \[ \Crit(\mixturedensity) = \left\{ x\in\RR^d: \frac{\partial\mixturedensity}{\partial x_1}(x) = \cdots = \frac{\partial\mixturedensity}{\partial x_d}(x) = 0 \right\} \] is a real-analytic set and therefore locally semianalytic. Since $x_0$ is an accumulation point, \[ x_0 \in \overline{ \Crit(\mixturedensity)\setminus\{x_0\} }. \] 
By the Curve Selection Lemma~
(\cite[Lemma 2.2.3]{valette2025subanalyticgeometry}), 
there exist
$ \varepsilon>0$ and a nonconstant real-analytic map
\[
\xPath:
(-\varepsilon,\varepsilon)
\longrightarrow
\RR^d
\]
such that
$\xPath(0)=x_0$
and
$\xPath((0,\varepsilon))
\subseteq
\Crit(\mixturedensity)\setminus\{x_0\}.$
Henceforth,
set
$\interval:=(0,\varepsilon)$
and  write $\xPath$ for the restriction of this map to
$ \interval$ . So
each coordinate function
$\xPath_j:
\interval
\longrightarrow
\RR$
belongs to $\OI$ .

We now use the critical point incidence proposition to lift this
real-analytic path of critical points to a real-analytic path in the
algebraic ridgeline correspondence.
Define the path
$
\profilePath
:=
\profileMap\circ\xPath:
\interval
\longrightarrow
\positiveprofiles
$, 
{where $\profileMap$ is as in \eqref{eq:define-profile-map-w}.}
Note that  by construction
\[
\profilePath(\interval)
\subseteq
\positiveprofiles
\subseteq
\profileopenset.
\]
Furthermore, the path
\[
\gamma
:
\interval
\longrightarrow
\RR^d\times\positiveprofiles,\qquad t\mapsto (\xPath(t),\profilePath(t))
\]
by definition, satisfies $\gamma(\interval)\subseteq\graphprofilemap$. Since
$\xPath(\interval)\subseteq\Crit(\mixturedensity)$,
Proposition~\ref{prop:critical-point-incidence} gives
\[
\gamma(\interval)
\subseteq
\RidgeCorr(f_1,\ldots,f_k)
\cap
\graphprofilemap.
\]
Since $\beta_k>0$ on $\positiveprofiles$ , work in the affine chart
$ \beta_k\neq0$ in 
$\PP_{\CC}^{k-1}$.
Set
\[
\rho_i:=\frac{\beta_i}{\beta_k}
=
\frac{w_i}{w_k}
\exp\!\left((h_i-h_k)\circ\xPath\right),
\qquad i=1,\ldots,k-1,
\]
and write $\rho := (\rho_1,\ldots,\rho_{k-1}) : \interval \longrightarrow \RR_{>0}^{k-1}.$
Let $\RidgeCorr_{\mathrm{aff}}\subseteq\CC^d\times\CC^{k-1}$ be the ridgeline correspondence in this chart, and let
$ \profileopenset_{\mathrm{aff}}\subseteq\CC^{k-1}$ be the affine image of
$ \profileopenset\cap\{\beta_k\neq0\}$ . Then
\[
(\xPath(t),\rho(t))\in\RidgeCorr_{\mathrm{aff}},
\qquad
\rho(\interval)\subseteq \profileopenset_{\mathrm{aff}}.
\]
The projection
\[
\RidgeCorr_{\mathrm{aff}}
\cap(\CC^d\times \profileopenset_{\mathrm{aff}})
\longrightarrow \profileopenset_{\mathrm{aff}}
\]
has finite fibers by hypothesis~\textup{(2b)}. Therefore,
Lemma~\ref{lem:zariski-closure-finite-fiber} gives a finite extension
\[
K:=\CC(\rho_1,\ldots,\rho_{k-1})
\subseteq
\CC(\xPath_1,\ldots,\xPath_d,
\rho_1,\ldots,\rho_{k-1}).
\]
    The extension being finite implies that every coordinate $\xPath_j$ is algebraic over $K$. 
    Moreover, since $(h_i-h_k)\circ\xPath \in \CC[\xPath_1,\ldots,\xPath_d]$,  every  function $(h_i-h_k)\circ\xPath$ is also algebraic over $K$.

\medskip

For $i=1,\ldots,k-1$ , set
$H_i:=(h_i-h_k)\circ\xPath\in\OI.$
Consider the finite-dimensional $\QQ$ -vector space
spanned by the derivatives of $H_i$:
\begin{equation}
\label{eq:derivative-span}
W_h
:=
\operatorname{span}_{\QQ}
\{H_1',\ldots,H_{k-1}'\}
\subseteq\OI.
\end{equation}
We denote the  
$\mathbb{Q}$-vector space dimension by $s$.

If $s=0$ , then every $H_i$ , and hence every $\rho_i$ , is constant. The path $(\xPath,\rho)$ therefore lies in a single finite profile fiber. Since $\xPath$ is continuous, it is constant, a contradiction.

Therefore,
we may choose $v_1,\dots,v_s$, 
to be a subset of $H_1,\dots, H_{k-1}$ 
such that the derivatives
$v_1',\dots,v_s'$ form a basis for $W_h$.

Because $v_1',\ldots,v_s'$ form a basis of $W_h$ ,
there exist unique rational numbers
$ q_{i1},\ldots,q_{is}\in\QQ$ such that
\begin{equation}
\label{eq:derivative-basis-expansion}
H_i'
=
\sum_{j=1}^{s}q_{ij}v_j',\qquad i=1,\dots,k-1.
\end{equation}
Therefore, 
for $i=1,\dots,k-1$,
$\left(
H_i-
\sum_{j=1}^{s}q_{ij}v_j
\right)'=0$
and
there exists a constant $c_i\in\RR$ such
that
$H_i -
\sum_{j=1}^{s}q_{ij}v_j
=
c_i
.$
Equivalently, 
\begin{equation}\label{eq:relation-h-left-v-right}
(h_i-h_k)\circ\xPath
=
c_i+
\sum_{j=1}^{s}q_{ij}v_j.
\end{equation}

We set
$K_0
:=
\CC(e^{v_1},\ldots,e^{v_s})$
and have the field extension
\[L
:=
\CC(
v_1,\ldots,v_s,
e^{v_1},\ldots,e^{v_s}
).
\]
Choose $N_i>0$ such that $N_iq_{ij}\in\ZZ$ for every $j$. Then, from \eqref{eq:relation-h-left-v-right}, we derive the relation
\[
\left(e^{(h_i-h_k)\circ\xPath}\right)^{N_i}
=
e^{N_ic_i}
\prod_{j=1}^{s}(e^{v_j})^{N_iq_{ij}}
\in
K_0.
\]
Thus every $e^{(h_i-h_k)\circ\xPath}$ is algebraic over $K_0$ (as an  $N_i$-th root). 
So
$K$ is algebraic over $K_0$. Each $v_j$ is algebraic over $K$ and hence over
$K_0$. Therefore $L$ is algebraic over $K_0$, and
\[
\trdeg_{\CC}L
=
\trdeg_{\CC}K_0
\leq
s.
\]
On the other hand,
$ v_1',\ldots,v_s'$ are linearly independent over $\QQ$ by
construction. Proposition~\ref{prop:one-variable-ax} therefore gives
\[
\trdeg_{\CC}L
=
\trdeg_{\CC}
\CC(v_1,\ldots,v_s,e^{v_1},\ldots,e^{v_s})
\geq
s+1.
\]
This contradicts the upper bound
\[
\trdeg_{\CC}L\leq s.
\]
Therefore, $\Crit(\mixturedensity)$ is finite.
\end{proof}
One way to verify compactness of the
critical set is by extending a positive ridgeline locus parameterization from
the open simplex $\Delta_{k-1}^{\circ}$ to the closed simplex
$ \Delta_{k-1}$ , thus parameterizing the \demph{nonnegative ridgeline locus}  $\mathcal R_{\geq0}(f_1,\ldots,f_k)$.

\begin{proposition}[Compactness from the nonnegative ridgeline locus]
\label{prop:compactness-via-closed-ridgeline}
Suppose the function $R:\OpenSimplex\longrightarrow\RR^d$ parameterizes the positive ridgeline locus as $$\mathcal R_{>0}(f_1,\ldots,f_k)=R(\Delta_{k-1}^{\circ}),$$ 
and  extends to a continuous map
$R:\Delta_{k-1}\longrightarrow\RR^d.$
Then $\Crit(\mixturedensity)$ is compact.
\end{proposition}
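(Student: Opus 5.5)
The plan is to realize the critical set as a closed subset of a compact set. Two facts about $\Crit(\mixturedensity)$ are needed: it is closed in $\RR^d$, and it is contained in a bounded set.

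Closedness is immediate. The gradient $\nabla\mixturedensity$ is continuous, since each $f_i$ is continuously differentiable. Hence $\Crit(\mixturedensity)=(\nabla\mixturedensity)^{-1}(0)$ is closed in $\RR^d$.

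For boundedness, I would invoke Corollary~\ref{cor:ridgeline-containment}, which gives
\[
\Crit(\mixturedensity)\subseteq\mathcal R_{>0}(f_1,\ldots,f_k)=R(\OpenSimplex)\subseteq R(\Delta_{k-1}).
\]
The closed simplex $\Delta_{k-1}$ is compact, being closed and bounded in $\RR^k$. By hypothesis the extension $R:\Delta_{k-1}\to\RR^d$ is continuous, so $R(\Delta_{k-1})$ is compact. In particular it is closed and bounded in $\RR^d$.

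Combining the two facts, $\Crit(\mixturedensity)$ is a closed subset of the compact set $R(\Delta_{k-1})$. It is therefore compact by Heine--Borel, or simply as a closed subset of a compact Hausdorff space. The inclusion into the image of the closed simplex (the nonnegative ridgeline locus) is what makes this work: the positive locus $R(\OpenSimplex)$ alone need not be closed, but closedness of $\Crit(\mixturedensity)$ comes from continuity of the gradient rather than from the ridgeline locus.

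I expect no real obstacle. The one point to check is that the containment from Corollary~\ref{cor:ridgeline-containment} holds for the fixed $\weight\in\OpenSimplex$ independently of the parameterization $R$. It does, because the positive ridgeline locus depends only on $f_1,\ldots,f_k$.
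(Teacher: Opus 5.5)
Your proof is correct and follows the paper's argument essentially verbatim. Closedness of $\Crit(\mixturedensity)$ comes from continuity of the gradient, Corollary~\ref{cor:ridgeline-containment} gives $\Crit(\mixturedensity)\subseteq\mathcal R_{>0}(f_1,\ldots,f_k)=R(\OpenSimplex)\subseteq R(\Delta_{k-1})$, and $R(\Delta_{k-1})$ is compact as the continuous image of the closed simplex; the only cosmetic difference is that you justify continuity of $\nabla\mixturedensity$ by the $C^1$ hypothesis rather than by real analyticity, which is equally valid.
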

\begin{proof}
Since
$ \mixturedensity$ 
is real analytic,
$ \nabla\mixturedensity$ is continuous, and
\[
\Crit(\mixturedensity)
=
(\nabla\mixturedensity)^{-1}(\{0\}),
\]
we conclude that $\Crit(\mixturedensity)$ is a closed set. By
\Cref{cor:ridgeline-containment},
\[
\Crit(\mixturedensity)
\subseteq
\mathcal R_{>0}(f_1,\ldots,f_k).
\]
We also have that
\[ \mathcal R_{>0}(f_1,\ldots,f_k)
= R(\Delta_{k-1}^{\circ}) \subseteq R(\Delta_{k-1}) = \mathcal R_{\geq0}(f_1,\ldots,f_k).\]
Since $\Delta_{k-1}$ is compact and $R$ is continuous, $R(\Delta_{k-1})=\mathcal R_{\geq0}(f_1,\ldots,f_k)$ is compact. We have shown that $\Crit(\mixturedensity)$ is a closed subset of $R(\Delta_{k-1})$ . Therefore, $\Crit(\mixturedensity)$ is~compact.
\end{proof}

    \section{Application to Gaussian mixtures}
\label{sec:gaussian-mixtures}

Consider the (homoscedastic or heteroscedastic) Gaussian mixture density
\[
\mixturedensity(x)
=
\sum_{i=1}^{k} w_i f_i(x),
\qquad
x\in\RR^d,
\]
where $w_i>0$, $\sum_{i=1}^{k}w_i=1$, and
\[
f_i(x)
=
\frac{1}{\sqrt{\det(2\pi\Sigma_i)}}
\exp\left(
-\frac12
(x-\mu_i)^{\mathsf T}\Sigma_i^{-1}(x-\mu_i)
\right).
\]
Here $\mu_i\in\RR^d$, and each $\Sigma_i$ is real symmetric and
positive definite. Writing 
$f_i=e^{h_i}$, we have
\[
h_i(x)
=
-\frac12
(x-\mu_i)^{\mathsf T}\Sigma_i^{-1}(x-\mu_i)
-\frac12\log\det(2\pi\Sigma_i)
\]
and
\[
\nabla h_i(x)=-\Sigma_i^{-1}(x-\mu_i).
\]
For $\beta=(\beta_1,\ldots,\beta_k)\in\CC^k$, define
\[
A(\beta):=\sum_{i=1}^{k}\beta_i\Sigma_i^{-1},
\qquad
b(\beta):=\sum_{i=1}^{k}\beta_i\Sigma_i^{-1}\mu_i.
\]
The unsaturated Gaussian ridgeline equations are
$\sum_{i=1}^{k}\beta_i\nabla h_i(x)=0$,
or equivalently
\begin{equation}
\label{eq:gaussian-ridgeline-equations}
A(\beta)x=b(\beta).
\end{equation}
Take  
$\profileopenset\subseteq \PP^{k-1}_\CC$ to be the dense Zariski open set 
\begin{equation}\label{eq:define-U-heteroskedastic}
\profileopenset:=\{
[\beta]\in \PP^{k-1}_\CC:
\det A(\beta)\neq 0
\}
\end{equation}

\begin{proposition}[Gaussian profile fibers and compactness]
\label{prop:compact-and-gaussian-profile-fibers}
The open set $\profileopenset$ in
\eqref{eq:define-U-heteroskedastic} is dense and contains
$ \positiveprofiles$ . Moreover, the profile projection
\[
\betaprojection
{
\RidgeCorr(f_1,\ldots,f_k)
\cap
(\CC^d\times\profileopenset)
}
{\profileopenset}
\]
has fibers of cardinality one, and
$ \Crit(\mixturedensity)$ is compact.
\end{proposition}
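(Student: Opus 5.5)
The proof splits into three parts: density together with containment of the positive profiles, the fiber count, and compactness. Compactness comes from Proposition~\ref{prop:compactness-via-closed-ridgeline}.

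\emph{Density and containment.} The function $\det A(\beta)$ is a homogeneous polynomial of degree $d$ in $\beta$, so $\profileopenset$ is Zariski open in $\PP^{k-1}_\CC$. Since $\PP^{k-1}_\CC$ is irreducible, $\profileopenset$ is dense as soon as it is nonempty. Take $[\beta]\in\positiveprofiles$ with a representative having all $\beta_i>0$. Then $A(\beta)=\sum_i\beta_i\Sigma_i^{-1}$ is a positive combination of symmetric positive definite matrices, hence positive definite, so $\det A(\beta)>0$. This shows both that $\profileopenset\neq\emptyset$ and that $\positiveprofiles\subseteq\profileopenset$.

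\emph{Fibers.} Fix $[\beta]\in\profileopenset$. The unsaturated equations $A(\beta)x=b(\beta)$ have exactly one solution, namely $x=A(\beta)^{-1}b(\beta)$, and this is well defined projectively because $A$ and $b$ are both linear in $\beta$.

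The subtle point is the saturation by $(\beta_1\cdots\beta_k)^\infty$: points of $\RidgeCorr$ with some $\beta_i=0$ are not directly described by Remark~\ref{remark:saturation-nonzero-profile}. My plan is to use the inclusion $\langle G\rangle\subseteq I_{\mathrm{ridge}}$, which gives $\RidgeCorr\subseteq\Variety(G_1,\dots,G_d)$. So every fiber over $[\beta]\in\profileopenset$ is contained in the singleton $\{A(\beta)^{-1}b(\beta)\}$, and cardinality at most one suffices for hypothesis (2b).

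To get exactly one, I argue as follows. On the set where $\det A\neq0$, the variety $\Variety(G)$ is the graph of the regular map $[\beta]\mapsto A(\beta)^{-1}b(\beta)$, and this graph is irreducible. Its points with $\prod\beta_i\neq0$ form a nonempty open subset, and by Remark~\ref{remark:saturation-nonzero-profile} they lie in $\RidgeCorr$. Since $\RidgeCorr$ is closed, it contains the Zariski closure of that open subset inside $\CC^d\times\profileopenset$, which is the whole graph. So every fiber has cardinality exactly one. I expect this saturation bookkeeping to be the main (mild) obstacle.

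\emph{Compactness.} Define $R:\Delta_{k-1}\to\RR^d$ by
\[
R(\beta)=A(\beta)^{-1}b(\beta).
\]
This is defined on the entire closed simplex: for $\beta\geq0$ with $\sum\beta_i=1$, $A(\beta)$ is a nonzero nonnegative combination of positive definite matrices, hence positive definite and invertible. $R$ is continuous because matrix inversion is continuous on invertible matrices.

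By Remark~\ref{remark:saturation-nonzero-profile} and the fiber computation, for positive $\beta$ the point $(x,[\beta])$ lies in $\RidgeCorr$ exactly when $x=R(\beta)$. Hence $\mathcal R_{>0}(f_1,\ldots,f_k)=R(\OpenSimplex)$, and $R$ extends continuously to $\Delta_{k-1}$. Proposition~\ref{prop:compactness-via-closed-ridgeline} then gives that $\Crit(\mixturedensity)$ is compact.
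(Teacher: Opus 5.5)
Your proposal is correct and takes the same route as the paper: positive definiteness of $A(\beta)$ gives $\positiveprofiles\subseteq\profileopenset$, the unique solution $x=A(\beta)^{-1}b(\beta)$ of the linear system gives the fibers, and continuity of this parameterization on the closed simplex is fed into Proposition~\ref{prop:compactness-via-closed-ridgeline} for compactness. Your extra steps fill in details the paper's short proof passes over: density via irreducibility of $\PP^{k-1}_\CC$, the containment $\RidgeCorr\subseteq\Variety(G_1,\dots,G_d)$, and the closure argument showing that fibers over profiles with some $\beta_i=0$ are still nonempty after saturation.
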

\begin{proof}
Let $[\beta]\in\positiveprofiles$ . Since each $\Sigma_i^{-1}$ is positive definite,
then $A(\beta)$ is positive definite, and
$\positiveprofiles
\subseteq
\profileopenset.$
Since $A(\beta)$ is invertible,
the fiber consists of the unique point
\begin{equation}
\label{eq:gaussian-ridgeline-parameterization}
x
=
A(\beta)^{-1}b(\beta).
\end{equation}
The positive ridgeline locus is then parameterized by $\OpenSimplex$ through \eqref{eq:gaussian-ridgeline-parameterization}:
$$\mathcal R_{>0}(f_1,\ldots,f_k)
 = \{ A(\weight)^{-1}b(\weight) : \weight\in\OpenSimplex  \}$$
This parametrization can be extended to $\Delta_{k-1}$ since $A(w)$ remains invertible as long as $w\in\Delta_{k-1}$. Hence compactness of $\Crit(\mixturedensity)$ follows from \Cref{prop:compactness-via-closed-ridgeline}
\end{proof}

We can now prove \Cref{theorem:quadratic}.

\begin{proof}[Proof of Theorem~\ref{theorem:quadratic}]
By
~\Cref{prop:compact-and-gaussian-profile-fibers}, 
the critical set
$\Crit(\mixturedensity)$ is compact;
and there is a dense
Zariski-open set $\profileopenset\subseteq\PP_{\CC}^{k-1}$ containing
$\positiveprofiles$ such that the profile projection over $\profileopenset$ has finite
fibers.
\Cref{thm:criterion} implies that
$\Crit(\mixturedensity)$ is finite. 
Therefore 
 every Gaussian mixture has
finitely many modes. 
 \end{proof}

\begin{remark}[The homoscedastic case]
If $\Sigma_1=\cdots=\Sigma_k$, then we have
\[ 
    A(\weight)^{-1}b(\weight) 
    = 
    \sum_{i=1}^{k}\weight_i\mu_i. 
\] 
Hence the closed ridgeline locus becomes 
$    \mathcal R_{\geq0}(f_1,\ldots,f_k) 
    =
    \operatorname{conv}\{\mu_1,\ldots,\mu_k\}.$  
In particular, one recovers the well-known fact \cite{carreira2003number,Ray-Lindsay} that for homoscedastic Gaussian mixtures,
\[ 
    \Crit(\mixturedensity) \subseteq \operatorname{conv}\{\mu_1,\ldots,\mu_k\}. 
\] 
\end{remark}

\section{Discussion and conclusion}

\subsection{Global maximizers and finite support of Gaussian NPMLEs}
\label{ss:global-maximizers-and-npmles}

As an immediate consequence of \Cref{theorem:quadratic}, every finite Gaussian mixture has only finitely many global maximizers. 
    \begin{corollary}[Finiteness of global maximizers] \label{cor:finite-global-maximizers} Let 
\[ 
    f(x) = 
    \sum_{i=1}^{k} 
    w_i 
    \frac{1}{\sqrt{\det(2\pi\Sigma_i)}} 
    \exp\left( 
        -\frac12 (x-\mu_i)^{\mathsf T}\Sigma_i^{-1}(x-\mu_i)
    \right), 
    \qquad x\in\RR^d, 
    \] where $w_i>0$
    and $\Sigma_i$ is positive definite. 
    Then 
    $\operatorname*{arg\,max}_{x\in\RR^d} f(x)$
    is a nonempty finite set. 
    \end{corollary}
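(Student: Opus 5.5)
The plan has two parts: show that the argmax set is nonempty, then show that it is finite. Nonemptiness comes from a coercivity/compactness argument. Finiteness follows at once from \Cref{theorem:quadratic}, because every global maximizer is a critical point.

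For nonemptiness, note that $f$ is continuous and strictly positive on $\RR^d$. Each summand tends to $0$ as $\|x\|\to\infty$, since $\Sigma_i^{-1}$ is positive definite. Explicitly,
\[
(x-\mu_i)^{\mathsf T}\Sigma_i^{-1}(x-\mu_i)\geq \lambda_{\min}(\Sigma_i^{-1})\|x-\mu_i\|^2\to\infty .
\]
Hence $f(x)\to 0$ as $\|x\|\to\infty$. Fix any point, say $x^\ast=\mu_1$, and set $c:=f(x^\ast)>0$. Choose $R>0$ with $f(x)<c$ whenever $\|x\|>R$. Then the supremum of $f$ over $\RR^d$ equals its supremum over the closed ball $\overline{B}(0,R)$, which contains $x^\ast$. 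By compactness of the ball and continuity of $f$, this supremum is attained. So $\operatorname*{arg\,max} f$ is nonempty; moreover it is contained in $\overline{B}(0,R)$.

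For finiteness, any global maximizer is in particular a local maximum of the differentiable function $f$. It is therefore a critical point, so
\[
\operatorname*{arg\,max} f\subseteq \operatorname{Modes}(f)\subseteq\Crit(f),
\]
exactly as in \Cref{cor:ridgeline-containment}. The statement allows arbitrary $w_i>0$ without the constraint $\sum w_i=1$, so we first rescale. Set $W=\sum_i w_i$ and $\tilde w_i=w_i/W$. Then $f=W\cdot\tilde f$, where $\tilde f$ is the Gaussian mixture with weights $\tilde w\in\OpenSimplex$. Multiplication by $W>0$ changes neither the argmax set nor the critical set. Hence $\Crit(f)=\Crit(\tilde f)$, and this set is finite by \Cref{theorem:quadratic}. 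Consequently $\operatorname*{arg\,max} f$ is finite.

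There is no serious obstacle; all the substantive work is in \Cref{theorem:quadratic}. The only points needing care are the coercivity estimate, which requires positive definiteness of every $\Sigma_i$, and the normalization of the weights, both of which are routine.
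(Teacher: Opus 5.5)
Your proposal is correct and is essentially the paper's argument: the paper presents this corollary as an immediate consequence of \Cref{theorem:quadratic}, since global maximizers are critical points and $f$ is continuous, positive, and vanishes at infinity. Your explicit rescaling of the weights to the open simplex, which the statement does not impose, fills in a detail the paper leaves implicit.
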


We next apply 
\Cref{cor:finite-global-maximizers}
to  \demph{nonparametric maximum likelihood estimation} for \demph{Gaussian location mixtures}.
Following the notation in \cite{NPMLE},
let $X_1,\ldots,X_n\in\RR^d$, and let
$\Sigma_1,\ldots,\Sigma_n$ be positive definite covariance matrices. Let $\mathcal{P}(\RR^d)$ denote the space of all probability measures on $\RR^d$. 
For $G \in \mathcal{P}(\RR^d)$, write
\[
f_{G,\Sigma_i}(x)
:=
\int_{\RR^d}
\phi_{\Sigma_i}(x-\theta)\,dG(\theta),
\]
where $\phi_{\Sigma_i}$ denotes the density of $N(0,\Sigma_i)$. A \demph{Gaussian NPMLE} is any solution of
\begin{equation}
\label{eq:heteroscedastic-gaussian-npmle}
\widehat G
\in
\operatorname*{arg\,max}_{G\in\mathcal P(\RR^d)}
\frac{1}{n}
\sum_{i=1}^{n}
\log f_{G,\Sigma_i}(X_i).
\end{equation}

\begin{corollary}[Finite support of Gaussian NPMLEs]
\label{cor:gaussian-npmle-finite-support}
Every solution $\widehat G$ of
\eqref{eq:heteroscedastic-gaussian-npmle} has finite support.
Moreover, there is a finite set $\mathcal A\subseteq\RR^d$ such that
$\operatorname{supp}(\widehat G)\subseteq\mathcal A$
for any solution $\widehat G$.
\end{corollary}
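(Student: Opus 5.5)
The plan is to reduce the statement to the finiteness of Gaussian mixture modes, \Cref{cor:finite-global-maximizers}, via the first-order optimality (KKT) characterization of the NPMLE. First I will record the standard facts about \eqref{eq:heteroscedastic-gaussian-npmle}. The objective $\ell(G)=\frac1n\sum_i\log f_{G,\Sigma_i}(X_i)$ is concave in $G$ and bounded above, since $\phi_{\Sigma_i}\le \det(2\pi\Sigma_i)^{-1/2}$. Moreover, the vector $(f_{G,\Sigma_1}(X_1),\ldots,f_{G,\Sigma_n}(X_n))$ ranges over a convex set $C\subseteq\RR^n_{>0}$. Because $\log$ is strictly concave, every maximizer $\widehat G$ determines the same optimal vector $\widehat f=(\widehat f_1,\ldots,\widehat f_n)$. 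I will only need that some maximizer exists; this holds because $C$ is the convex hull of the curve $\theta\mapsto(\phi_{\Sigma_i}(X_i-\theta))_i$, which tends to $0$ at infinity, so the closure of $C$ adds only points of the form $tc$ with $t\le1$. In any case, the statement concerns solutions $\widehat G$, so existence is not strictly needed.

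The second step is the gradient (directional derivative) characterization. For any maximizer $\widehat G$ and any $\theta\in\RR^d$, the derivative of $\ell$ along $(1-\varepsilon)\widehat G+\varepsilon\delta_\theta$ at $\varepsilon=0^+$ is nonpositive, which gives
\[
D(\theta):=\frac1n\sum_{i=1}^n\frac{\phi_{\Sigma_i}(X_i-\theta)}{\widehat f_i}\le 1\quad\text{for all }\theta\in\RR^d .
\]
Integrating against $\widehat G$ gives $\int D\,d\widehat G=1$, so $D=1$ holds $\widehat G$-almost everywhere. Since $D$ is continuous, $\operatorname{supp}(\widehat G)\subseteq\{\theta: D(\theta)=1\}=\operatorname*{arg\,max}_{\theta}D(\theta)$.

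The third step identifies $D$ as a Gaussian mixture. Because $\phi_{\Sigma_i}(X_i-\theta)=\phi_{\Sigma_i}(\theta-X_i)$, $D$ is the Gaussian density of $N(X_i,\Sigma_i)$ summed with positive weights $\frac{1}{n\widehat f_i}$. Normalizing these weights to sum to one rescales $D$ by a positive constant and does not change its argmax. The result is exactly a mixture $f$ of the form in \Cref{cor:finite-global-maximizers}, with $k=n$, $\mu_i=X_i$ and covariances $\Sigma_i$. Therefore $\mathcal A:=\operatorname*{arg\,max}D$ is a nonempty finite set. Since $\widehat f$, and hence $D$ and $\mathcal A$, do not depend on the choice of maximizer, $\mathcal A$ is a single finite set containing $\operatorname{supp}(\widehat G)$ for every solution. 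This proves both claims.

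The main obstacle is the second step: deriving the directional derivative inequality rigorously and showing the uniqueness of $\widehat f$. Differentiation under the integral is harmless here because all $\widehat f_i>0$ and the integrands are bounded. Uniqueness follows from strict concavity of $\sum\log$ on the convex set $C$: if two maximizers gave different vectors, their midpoint would strictly exceed the maximum.
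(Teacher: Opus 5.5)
Your proof is correct and follows the same route as the paper: both apply \Cref{cor:finite-global-maximizers} to the normalized dual mixture proportional to $\sum_i \widehat f_i^{-1}\phi_{\Sigma_i}(X_i-\theta)$, and both get a choice-independent finite set $\mathcal A$ from the uniqueness of the fitted likelihood vector. The only difference is that the paper cites \cite[Lemma~1]{NPMLE} for this uniqueness and for the inclusion $\operatorname{supp}(\widehat G)\subseteq\operatorname*{arg\,max}_\theta D(\theta)$, whereas you derive both directly from strict concavity of $\sum_i\log$ and the directional-derivative (KKT) argument, and that derivation is valid as written.
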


\begin{proof}
For a solution $\widehat G$, set
\[
\widehat L_i
:=
f_{\widehat G,\Sigma_i}(X_i),
\qquad i=1,\ldots,n,
\]
and define the \emph{dual} Gaussian mixture
\[
\widehat\psi_n(\theta)
:=
\sum_{i=1}^{n}
\frac{\widehat L_i^{-1}}
     {\sum_{j=1}^{n}\widehat L_j^{-1}}
\phi_{\Sigma_i}(X_i-\theta), \quad \theta \in \RR^d.
\]
By \cite[Lemma~1]{NPMLE}, the fitted likelihood vector
$(\widehat L_1,\ldots,\widehat L_n)$ is independent of the choice of
NPMLE, and every solution satisfies
\[
\operatorname{supp}(\widehat G)
\subseteq
\operatorname*{arg\,max}_{\theta\in\RR^d}
\widehat\psi_n(\theta).
\]
The function $\widehat\psi_n$ is a finite (possibly heteroscedastic) Gaussian
mixture with strictly positive weights. Therefore,
\Cref{cor:finite-global-maximizers} implies that the common set
\[
\mathcal A
:=
\operatorname*{arg\,max}_{\theta\in\RR^d}
\widehat\psi_n(\theta)
\]
is finite. It follows that every NPMLE is finitely supported and that
all NPMLEs are supported on the same finite set $\mathcal A$.
\end{proof}

\begin{remark}[The ridgeline manifold and the NPMLE]
The connection with ridgeline theory is also mentioned in
\cite[Lemma 3]{NPMLE}, where
the authors observe that the support of every NPMLE is
contained in the ridgeline manifold associated with the Gaussian
components $f_i$ with mean $\mu_i = X_i$ and covariance matrix $\Sigma_i$. 
This is a special case of
\Cref{cor:ridgeline-containment}.
More recently, \cite[Theorem 2]{wang2026finitegaussianmixturesfiniteness}
was the first to show the finiteness of the support of NPMLEs in the case of homoscedastic mixtures.
\end{remark}

\subsection{Applying the criterion more broadly}
\label{ss:beyond-gaussian-mixtures}

\Cref{thm:criterion}
is not specific to Gaussian mixtures.
As a nonquadratic application,
we consider mixtures of separable
even-power exponential densities.

\begin{theorem}
\label{thm:separable-even-power-mixtures}
For $i=1,\ldots,k$, let
\begin{equation}
\label{eq:f_i-separable-even-power}
  f_i(x)
  =
  c_i
  \exp\left(
    -\sum_{j=1}^{d}
    a_{ij}(x_j-\mu_{ij})^{2m_{ij}}
  \right),
  \qquad x\in\RR^d,
\end{equation}
where $a_{ij}>0$, $m_{ij}\in\ZZ_{>0}$, $\mu_{ij}\in\RR$, and
$c_i>0$ is the normalizing constant. For $w\in\OpenSimplex$, set
\[
  \mixturedensity(x)
  =
  \sum_{i=1}^{k}w_i f_i(x).
\]
Then $\Crit(\mixturedensity)$ is finite. In particular,
$\mixturedensity$ has finitely many modes.
\end{theorem}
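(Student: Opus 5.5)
The plan is to verify the two hypotheses of \Cref{thm:criterion} directly, exploiting the fact that the ridgeline equations decouple coordinate by coordinate. Writing $f_i=e^{h_i}$ with
\[
h_i(x)=\log c_i-\sum_{j=1}^{d}a_{ij}(x_j-\mu_{ij})^{2m_{ij}}\in\RR[x_1,\ldots,x_d],
\]
we get $\partial h_i/\partial x_j=-2m_{ij}a_{ij}(x_j-\mu_{ij})^{2m_{ij}-1}$. Hence the ridgeline correspondence polynomials \eqref{eq:define-G-polynomials} are, up to sign,
\[
G_j(x,\profilePath)=g_j(x_j;\profilePath):=\sum_{i=1}^{k}2m_{ij}a_{ij}\profilePath_i\,(x_j-\mu_{ij})^{2m_{ij}-1},\qquad j=1,\ldots,d .
\]
Each $G_j$ involves only the single variable $x_j$ together with $\profilePath$.

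For hypothesis (2), I would let $M_j:=\max_i m_{ij}$ and define the linear forms
\[
\ell_j(\profilePath):=\sum_{i:\,m_{ij}=M_j}2M_ja_{ij}\profilePath_i .
\]
Let $\profileopenset:=\{[\profilePath]\in\PP^{k-1}_\CC:\ell_1(\profilePath)\cdots\ell_d(\profilePath)\neq0\}$. This set is well defined because each $\ell_j$ is homogeneous. It is Zariski open, and it is dense because each $\ell_j$ is a nonzero linear form. It contains $\positiveprofiles$, since every $a_{ij}>0$, so $\ell_j(\profilePath)>0$ whenever all $\profilePath_i>0$. For $[\profilePath]\in\profileopenset$, the coefficient of $x_j^{2M_j-1}$ in $g_j(\,\cdot\,;\profilePath)$ is $\ell_j(\profilePath)\neq0$, so $g_j$ is a nonzero univariate polynomial with at most $2M_j-1$ roots. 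Since $\RidgeCorr(f_1,\ldots,f_k)\subseteq\Variety(G_1,\ldots,G_d)$, the fiber over $[\profilePath]$ lies in the product of these root sets. It therefore has at most $\prod_j(2M_j-1)$ points, which gives hypothesis (2b).

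For hypothesis (1), I would show that $\Crit(\mixturedensity)$ lies in the box $\prod_{j}[\min_i\mu_{ij},\max_i\mu_{ij}]$. By \Cref{prop:critical-point-incidence}, a critical point $x$ satisfies $g_j(x_j;\profilePath)=0$ with all $\profilePath_i=w_if_i(x)>0$. Because every exponent $2m_{ij}-1$ is odd, the term $(x_j-\mu_{ij})^{2m_{ij}-1}$ has the sign of $x_j-\mu_{ij}$. If $x_j>\max_i\mu_{ij}$, every summand of $g_j$ is strictly positive and $g_j\neq0$; symmetrically, if $x_j<\min_i\mu_{ij}$, every summand is strictly negative. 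Thus the critical set is bounded. It is also closed, being $(\nabla\mixturedensity)^{-1}(0)$ for a continuous gradient, and hence compact. This bound replaces the simplex-parameterization route of \Cref{prop:compactness-via-closed-ridgeline}, which is unnecessary here.

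With both hypotheses in place, \Cref{thm:criterion} shows that $\Crit(\mixturedensity)$ is finite, and the modes are finite since they are critical points. I expect no serious obstacle. The only care needed is in choosing $\profileopenset$ via the leading-degree coefficients, because components with smaller $m_{ij}$ contribute lower-degree terms. Those lower-degree terms are harmless: nonvanishing of the top coefficient is all that finiteness of the fibers requires.
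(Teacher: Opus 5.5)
Your proposal is correct and follows the paper's proof essentially step for step. Compactness comes from the same coordinatewise box $\prod_j[\min_i\mu_{ij},\max_i\mu_{ij}]$ given by the odd-degree sign argument, and hypothesis (2) comes from the same open set cut out by the leading-coefficient linear forms, after which \Cref{thm:criterion} is applied. The only difference is cosmetic: you bound each fiber explicitly by $\prod_j(2M_j-1)$ using $\RidgeCorr(f_1,\ldots,f_k)\subseteq\Variety(G_1,\ldots,G_d)$, where the paper cites the Finiteness Theorem.
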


\begin{proof}
Write $f_i=e^{h_i}$, where
$  h_i(x)
  =
  \log c_i
  -
  \sum_{j=1}^{d}
  a_{ij}(x_j-\mu_{ij})^{2m_{ij}}.$
Then
\[
  \frac{\partial h_i}{\partial x_j}(x)
  =
  -2m_{ij}a_{ij}
  (x_j-\mu_{ij})^{2m_{ij}-1}.
\]
We first verify that $\Crit(\mixturedensity)$ is compact.
Let $x\in\Crit(\mixturedensity)$ and set
\[
  \profilePath_i:=w_i f_i(x)>0,
  \qquad i=1,\ldots,k.
\]
For $ j=1,\ldots,d$,
after dividing the $j$-th critical point equation 
we obtain
the ridgeline correspondence polynomials,
\begin{equation}
\label{eq:even-power-critical}
 G_j(x,\profilePath)= \sum_{i=1}^{k}
  \profilePath_i m_{ij}a_{ij}
  (x_j-\mu_{ij})^{2m_{ij}-1}.
\end{equation}
Note that the left hand side is a 
sum of univariate odd degree polynomial in $x_j$ with $\beta_im_{ij}a_{ij}$ positive. If $x_j^*$ is a root of this polynomial, 
then  it must be in the interval
\[
\left[
\min_{1\leq i\leq k}\mu_{ij}, \;
\max_{1\leq i\leq k}\mu_{ij}
\right].
\]
It follows the set of critical points is bounded:
\[ \Crit(\mixturedensity) \subseteq 
\left[
\min_{1\leq i\leq k}\mu_{i1}, \;
\max_{1\leq i\leq k}\mu_{i1}
\right]
\times \cdots 
\times
\left[
\min_{1\leq i\leq k}\mu_{id}, \;
\max_{1\leq i\leq k}\mu_{id}
\right]\subset \RR^d
\]
Finally, since
  $\Crit(\mixturedensity)
  =
  (\nabla\mixturedensity)^{-1}(\{0\}),$
 is closed, compactness follows.

\medskip

We next verify the other condition in \Cref{thm:criterion}. The ridgeline correspondence is defined by the equations
\begin{equation}
\label{eq:even-power-ridgeline}
  \sum_{i=1}^{k}
  \varw_i m_{ij}a_{ij}
  (\varx_j-\mu_{ij})^{2m_{ij}-1}
  =
  0,
  \qquad j=1,\ldots,d.
\end{equation}
Notice that the equation \eqref{eq:even-power-ridgeline} is a univariate polynomial in the spatial coordinate
$\varx_j$,  with coefficients in $\CC[\varw]$ as $a_{i,j}\in\RR$ and $m_{ij}\in \ZZ$.
The leading coefficients of this univariate polynomial is 
the homogeneous linear form
\[
  \lambda_j(\varw)
  :=
  \sum_{\substack{1\leq i\leq k\\m_{ij}=M_j}}
  \varw_i M_j a_{ij}
  \text{ where }
M_j:=\max_{1\leq i\leq k}m_{ij}.  
\]
 If this coefficient is nonzero, then the system of equations has finitely many solutions \cite[Finiteness Theorem]{CLO-2025}.
In other words, we have the dense Zariski-open subset of $\PP^{k-1}_\CC$,
\[
  \profileopenset
  :=
\left\{
    [\varw]\in\PP^{k-1}_\CC\,:\,
    \lambda_1(\varw)\cdot\ldots\cdot \lambda_d(\varw)\neq 0
  \right\},
\]
where the projection has finite fibers. 
Moreover, 
every positive profile belongs to this open set as for every $j$ and positive $\beta$
\[
  \lambda_j(\beta)
  =
  \sum_{\substack{1\leq i\leq k\\m_{ij}=M_j}}
  \beta_i M_j a_{ij}
  >0.
\]

We have verified both hypotheses of \Cref{thm:criterion}. 
It follows that $\Crit(f)$ is finite. Since every mode of the
smooth density $f$ is a critical point, $f$ has finitely many modes.
\end{proof}

\begin{remark} 
In the case 
    when $m_{ij}=1$ for every $i$ and $j$, 
    the component $f_i$ is a multivariate Gaussian density with mean 
    $\mu_i=(\mu_{i1},\ldots,\mu_{id})$
    and diagonal covariance matrix 
    $\Sigma_i = \operatorname{diag}\left( \frac{1}{2a_{i1}},\ldots,\frac{1}{2a_{id}} \right)$. 
 Thus, the theorem contains mixtures of Gaussian densities with diagonal covariance matrices as a special case. 
    \end{remark}

\subsection{An affine Gaussian combination with infinitely many modes}
\label{ss:affine-gaussian-circle-modes}
For $a>0$, let
\[
\varphi_a(x)
=
\frac{a}{\pi}
\exp\left(-a\lVert x\rVert^2\right),
\qquad x\in\RR^2
\]
be the density of the centered Gaussian distribution
$N\left(0,\frac{1}{2a}I_2\right)$.

Fix $0<a<b$ and consider the affine linear combination
\begin{equation}
\label{eq:affine-gaussian-density}
f(x)
=
\frac{b}{b-a}\varphi_a(x)
-
\frac{a}{b-a}\varphi_b(x).
\end{equation}
Then \eqref{eq:affine-gaussian-density} is an affine combination of
Gaussian densities,
although it is not a convex combination because
its second coefficient is negative.

Every point on the circle
\begin{equation}
\label{eq:affine-gaussian-mode-circle}
\left\{
x\in\RR^2:
\lVert x\rVert^2
=
\frac{\log(b/a)}{b-a}
\right\}
\end{equation}
is a mode of $f$, see Figure~\ref{fig:degree-4}.
However, 
this example does not contradict  \Cref{thm:criterion} because of the requirement of
strictly positive weights.

\begin{figure}[htb!]
    \centering
    \includegraphics[width=0.5\linewidth]{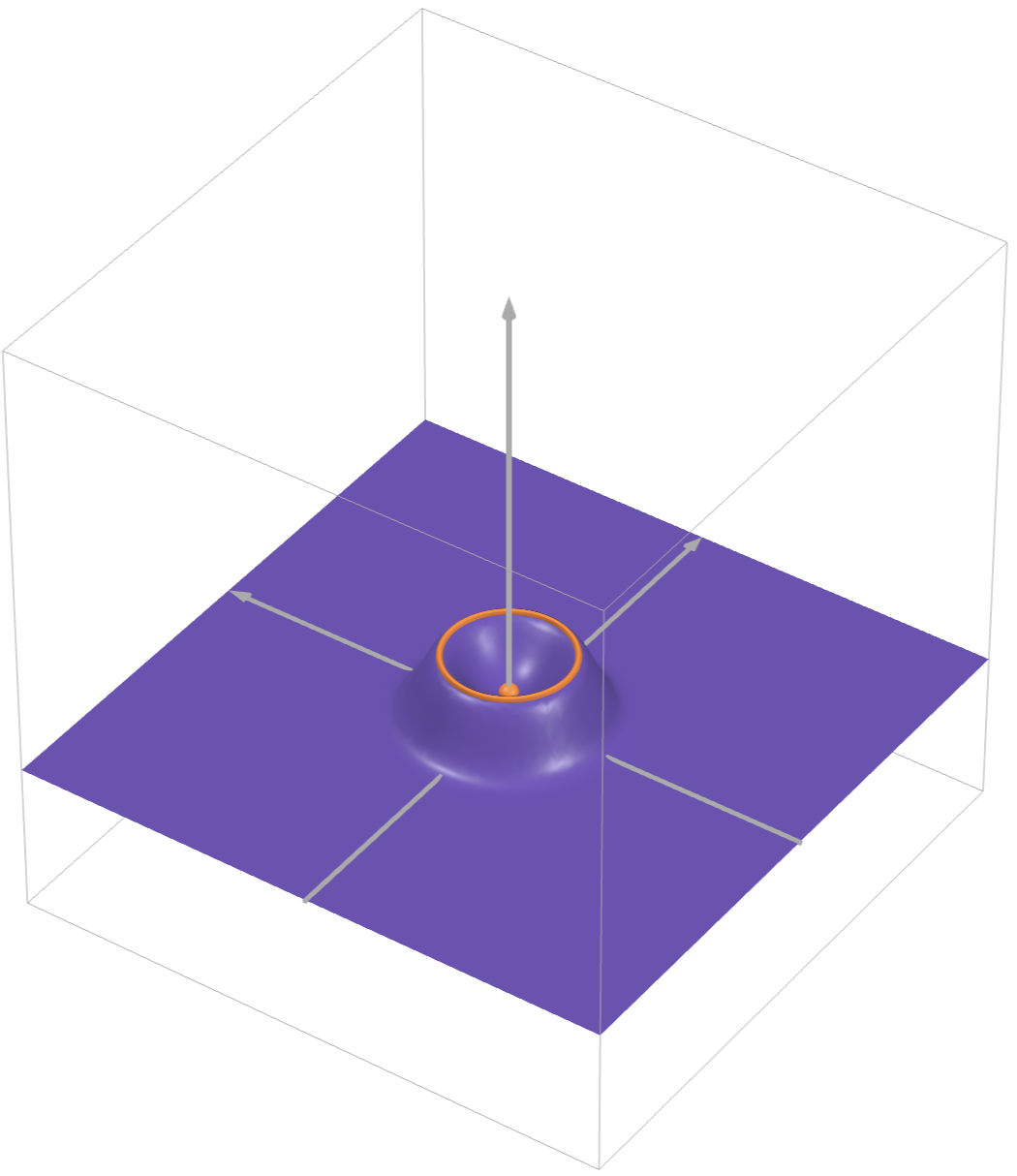}
    \caption{A density with infinitely many modes}
    \label{fig:degree-4} \end{figure}

\subsection{Radial polynomial density with infinitely many modes} 
In this example, the compactness hypothesis of \Cref{thm:criterion} holds, but the finite-fiber hypothesis 2b fails.
Consider the density
\[ 
f(x)
    =
    c_d\exp\left(-\bigl(\lVert x\rVert^2-1\bigr)^2\right) 
\] on $\RR^d$, where $d\geq2$ and $c_d>0$ is a normalizing constant. The radial polynomial $(\Vert x \Vert^2-1)^2$ is coercive, i.e., it grows to $\infty$ as $||x||\rightarrow\infty$, and yet the critical set $\Crit(f)$ is infinite:
\[ 
    \Crit(f)=\{0\}\cup
    \{
    x\in \RR^d
    : \lVert x\rVert^2=1
    \}
    \] 
is compact, but every point of the unit sphere is a mode, 
so the set of critical points and the set of modes are both infinite. The obstruction is visible algebraically. In the one-component case, the ridgeline equation over a nonzero profile $\beta$ reduces to \[ \bigl(x_1^2+\cdots+x_d^2-1\bigr)x_j=0, \qquad j=1,\ldots,d. \] Its complex solution set contains the quadric \[ x_1^2+\cdots+x_d^2=1, \] and hence has positive dimension. The profile projection 
does not have finite fibers. Therefore, 
\Cref{thm:criterion} does not apply because of item 2b.

\subsection{The maximum number of modes as an open problem}
\label{ss:extremal-number-of-modes}

For positive integers $d$ and $k$, 
recall from the introduction that
\begin{equation}
\label{eq:define-extremal-mode-number}
m(d,k)
:=
\sup
\left\{
\#\operatorname{Modes}(f):
 f \text{ is a Gaussian mixture on $\RR^d$ with $k$ components}
\right\}.
\end{equation}
\Cref{theorem:quadratic} shows 
that every Gaussian mixture has finitely many modes. 
In principle, one could vary the weights, means, and covariance matrices while keeping $d$ and $k$ fixed and obtain mixtures with arbitrarily finitely many modes and the supremum \eqref{eq:define-extremal-mode-number} would be infinity. 
However, the existence of a 
uniform bound on the number 
of isolated critical points, first proved in \cite{Amendola-Engstrom-Haase}, combined 
with 
\Cref{theorem:quadratic} 
yields that $m(d,k)<\infty$.
\begin{corollary}[Finiteness of the maximum number of modes] \label{cor:finiteness-mdk} For every $d,k\in\ZZ_{>0}$, the maximum number of modes of a $d$-dimensional Gaussian mixture with $k$ components is finite. 
In other words, 
$m(d,k)$ is finite. 

\end{corollary}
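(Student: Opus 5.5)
Combine \Cref{theorem:quadratic} with the uniform bound of Améndola--Engstrom--Haase \cite{Amendola-Engstrom-Haase}. That result states that, for fixed $d$ and $k$, there is a constant $B(d,k)$ such that every $d$-dimensional Gaussian mixture with $k$ components has at most $B(d,k)$ \emph{isolated} (or nondegenerate) critical points. The bound is obtained via Khovanskii-type fewnomial estimates for the polynomial--exponential system $\nabla f=0$, whose number of exponentials and degrees depend only on $d$ and $k$. The bound on $m(d,k)$ was previously conditional only because a mixture might have non-isolated critical points, e.g.\ a continuum of modes, which the counting bound does not see.

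First, fix an arbitrary mixture $f=\mixturedensity$ with parameters $w_i>0$, $\mu_i$, $\Sigma_i\succ0$. By \Cref{theorem:quadratic}, $\Crit(f)$ is finite, so every critical point is isolated. Hence $\#\Crit(f)$ equals the number of isolated critical points, which is at most $B(d,k)$. By \Cref{cor:ridgeline-containment}, $\operatorname{Modes}(f)\subseteq\Crit(f)$, so $\#\operatorname{Modes}(f)\le B(d,k)$. Since this bound is independent of the weights, means and covariances, taking the supremum in \eqref{eq:define-extremal-mode-number} gives $m(d,k)\le B(d,k)<\infty$.

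The only real point to check is the exact form of the cited bound. If \cite{Amendola-Engstrom-Haase} bounds only \emph{nondegenerate} critical points (a Khovanskii bound counts nondegenerate solutions), I would pass from isolated to nondegenerate points by a perturbation argument. An isolated critical point of the real-analytic $f$ has a nonzero local index, or at least persists under small generic perturbation of the parameters as one or more nondegenerate critical points nearby. Perturbing the $\mu_i$ (or adding a small generic linear term, which stays inside the Gaussian family after completing squares) therefore produces a mixture with at least as many nondegenerate critical points near the original finitely many ones. For modes specifically this is cleaner: a strict isolated local maximum of $f$ survives small $C^1$ perturbations as a local maximum in a small ball, and a generic perturbation within the family makes it nondegenerate. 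Hence $\#\operatorname{Modes}(f)\le\#\operatorname{Modes}(\tilde f)\le B(d,k)$ for a nearby generic $\tilde f$. I expect this matching between the hypotheses of the cited uniform bound and the isolated points supplied by \Cref{theorem:quadratic} to be the only delicate step. The rest is immediate.
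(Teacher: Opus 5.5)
Your proof is correct and follows the paper's argument: Theorem~\ref{theorem:quadratic} makes every critical point isolated, so the uniform Am\'endola--Engstrom--Haase bound on isolated critical points, previously conditional on finiteness, applies to every mixture and bounds $m(d,k)$. The perturbation fallback is not needed, since the paper cites that bound as covering isolated critical points; your general claim that an isolated critical point has nonzero index is false (two merging critical points in one dimension give index zero), but the modes-specific version you sketch is sound, because an isolated mode is a strict local maximum and survives a small Sard-generic tilt by $e^{\varepsilon^{\mathsf T}x}$, which stays inside the Gaussian family.
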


While the finiteness of $m(d,k)$ is now established, determining its value remains open.

\begin{problem}[The maximal Gaussian mode problem]
\label{prob:extremal-gaussian-modes}
Determine $m(d,k)$, or obtain upper and lower bounds that are sharp in
their dependence on $d$ and $k$.
\end{problem}
As discussed in the introduction, exact values are known in only a few cases, namely
\[
m(1,k)=k,
\qquad
m(d,1)=1,
\qquad
m(d,2)=d+1,
\]
while the first unknown value is currently  \cite{kabata2026least,nguyen2026bounds} bounded as 
$7 \leq m(2,3) \leq 196.$ 
More generally, it is interesting to determine the growth of $m(d,k)$ as either $d$ or $k$ increases and to identify the geometric configurations that produce  many modes.
Analogous questions can be posed for homoscedastic Gaussian mixtures and for the other polynomial--exponential mixture families covered by our finiteness criterion.

\subsubsection*{Acknowledgments}
Rodriguez's research is partially supported by the Alfred P. Sloan Foundation and National Science Foundation Grant No. 2510307. Rodriguez also gratefully acknowledges support from a Mercator Fellowship through the DFG Priority Programme Combinatorial Synergies (SPP 2458) and a KTH Digital Futures Scholar-in-Residence appointment.

\bigskip \medskip \bigskip

\noindent
\footnotesize {\bf Authors' addresses:}
\smallskip

\noindent Carlos Améndola, Technische Universität Berlin, Germany \hfill {\tt  amendola@math.tu-berlin.de} \url{https://www.tu.berlin/alg-geom-data}

\smallskip
\noindent Jose Israel Rodriguez, University of Wisconsin--Madison, USA \hfill {\tt  jose@math.wisc.edu}\newline
\url{https://sites.google.com/wisc.edu/jose/}

\end{document}